\documentclass[cis]{ipart_v1}

\firstpage{1}
\Vol{X}
\Issue{X}
\Year{2020}

\usepackage{enumerate}
\usepackage{physics}
\usepackage{dsfont}
\usepackage{tikz}
\usepackage{color}
\usetikzlibrary{matrix,decorations.pathreplacing}
\usepackage{mathrsfs}
\usepackage{csquotes}
\usepackage{graphicx}




\theoremstyle{plain}
\newtheorem{theorem}{Theorem}[section]
\newtheorem{proposition}{Proposition}[section]
\newtheorem{lemma}{Lemma}[section]

\newtheorem{lemma_1}{Lemma}[section]
\theoremstyle{definition}
\newtheorem{Definition}{Definition}[section]

\theoremstyle{definition}
\newtheorem{Remark}{Remark}[section]
\newtheorem{Example}{Example}[section]
\newtheorem{corollary}{Corollary}[section]

\newtheorem{Remark_1}{Remark}[section]

\begin{document}

\title[Zero-Sum Differential Games on the Wasserstein Space]{Zero-Sum Differential Games on the Wasserstein Space$^\ast$}

\author[Jun Moon and Tamer Ba\c{s}ar]{Jun Moon$^\dag$, Tamer Ba\c{s}ar$\ddagger$
\blfootnote{$^\ast$Dedicated to Tyrone Duncan on the occasion of his 80th birthday.}
\blfootnote{
$^\dag$Research of Jun Moon was supported in part by the National Research Foundation of Korea (NRF) Grant funded by the Ministry of Science and ICT, Korea (NRF-2017R1E1A1A03070936, NRF-2017R1A5A1015311). 
}
\blfootnote{
$\ddagger$Research of Tamer Ba\c{s}ar was supported in part by the Office of Naval Research (ONR) MURI Grant N00014-16-1-2710, and in part by the Air Force Office of Scientific Research (AFOSR) Grant FA9550-19-1-0353.}
}

\begin{abstract}
We consider two-player zero-sum differential games (ZSDGs), where the state process (dynamical system) depends on the random initial condition and the state process's distribution, and the objective functional includes the state process's distribution and the random target variable. Unlike ZSDGs studied in the existing literature, the ZSDG of this paper introduces a new technical challenge, since the corresponding (lower and upper) value functions are defined on $\mathcal{P}_2$ (the set of probability measures with finite second moments) or $\mathcal{L}_2$ (the set of random variables with finite second moments), both of which are infinite-dimensional spaces. We show that the (lower and upper) value functions on $\mathcal{P}_2$ and $\mathcal{L}_2$ are equivalent (law invariant) and continuous, satisfying dynamic programming principles. We use the notion of derivative of a function of probability measures in $\mathcal{P}_2$ and its lifted version in $\mathcal{L}_2$ to show that the (lower and upper) value functions are unique viscosity solutions to the associated (lower and upper) Hamilton-Jacobi-Isaacs equations that are (infinite-dimensional) first-order PDEs on $\mathcal{P}_2$ and $\mathcal{L}_2$, where the uniqueness is obtained via the comparison principle. Under the Isaacs condition, we show that the ZSDG has a value.
\end{abstract}

\maketitle

\section{Introduction}
In this paper, we consider a class of nonlinear two-player zero-sum differential games (ZSDGs), where the state process (dynamical system) depends on the random initial condition and the state process's distribution (the law of the state process), and the objective functional includes the state process's distribution and the random target variable. The main objectives of this paper are to establish dynamic programming principles (DPPs) for lower and upper value functions of the ZSDG, show that the value function is the unique viscosity solution of the associated Hamilton-Jacobi-Isaacs (HJI) equation, and prove that under the Isaacs condition, the ZSDG has a value.

(Deterministic and stochastic) ZSDGs and their applications have been studied extensively in the literature; see \cite{Basar2, Bardi_book_1997, Bensoussan_book_2007, Buckdahn_Cardaliaguet_DGAA_2011, Basar_Zaccour_book} and the references therein. Specifically, Rufus Isaacs \cite{Isaacs_book} was the first who considered (deterministic) ZSDGs with applications to pursuit-evasion games. Elliott and Kalton \cite{Elliott_1972} introduced the concept of \emph{nonanticipative strategies} for the players, which was used in \cite{Evans_1984, Fleming_1989} to obtain DPPs for lower and upper value functions of the ZSDG, and show that the value functions are viscosity solutions to associated (lower and upper) HJI equations. The existence of the value of ZSDGs and the existence of saddle-point solutions were studied in \cite{Elliott_1972, Friedman_JDE_1970, Ghosh_JOTA_2004}.

Later, the results of \cite{Evans_1984, Fleming_1989} were extended to various other settings for ZSDGs. We mention here a few references that are relevant to our paper. The papers \cite{Buckdahn_SICON_2008, Li_AMO_2015} considered the class of games where the state and the objective functional are described by coupled forward-backward stochastic differential equations (SDEs). They used the so-called \emph{backward semigroup} associated with the backward SDE to obtain DPPs and the viscosity solution property of the HJI equations. ZSDGs with unbounded controls were considered in \cite{Qui_ESIAM_2013}. The weak formulation of ZSDGs and the mean field framework of ZSDGs were studied in \cite{Pham_SICON_2014, Li_MIn_SICON_2016, Djehiche_AMO_2018, Averboukh_DGAA_2018}, where in \cite{Pham_SICON_2014} path-dependent HJI equations and their viscosity solutions were considered. Reference \cite{Averboukh_DGAA_2018} used the feedback approach to construct a suboptimal solution and prove the existence of the value function. (Deterministic and stochastic) linear-quadratic ZSDGs with Riccati equations were studied in \cite{Zhang_SICON_2005, Delfour_SICON_2007, YU_SICON_2015, Sun_SICON_2016, Moon_TAC_2019_Markov} and the references therein. Maximum principles for risk-sensitive ZSDGs were established in \cite{Moon_TAC_Risk_2019}, and for nonzero-sum DGs in \cite{5446378}.

There are numerous applications of ZSDGs. Pursuit-evasion games and their applications to characterization of reachable sets for dynamical systems were considered in \cite{Basar_Zaccour_book, Buckdahn_Cardaliaguet_DGAA_2011,  Mitchell_TAC_2005,Margellos_TAC_2011}. 
Optimal resource allocation, distributed control problems and their applications can also be considered within the framework of ZSDGs \cite{Yong_Book_2015, Basar_Zaccour_book}. For particular applications of ZSDGs considered in this paper, see the discussion in Examples \ref{Example_1}-\ref{Example_3} of Section \ref{Section_2_2}.

We should note that for the (lower and upper) value functions of ZSDGs and the associated HJI equations studied in \cite{Bardi_book_1997, Evans_1984, Fleming_1989, Buckdahn_SICON_2008, Li_AMO_2015, Qui_ESIAM_2013, Pham_SICON_2014}, the state space is the standard finite-dimensional space.
In our formulation, however, the random initial condition and the law of the state process affect the dynamical system, and the objective functional includes the state process's distribution as well as the random target variable. Hence, unlike \cite{Bardi_book_1997, Evans_1984, Fleming_1989, Buckdahn_SICON_2008, Li_AMO_2015, Qui_ESIAM_2013, Pham_SICON_2014}, in this paper the state arguments of the (lower and upper) value functions and the associated HJI equations belong to $\mathcal{P}_2$ (the set of probability measures with finite second moments) and $\mathcal{L}_2$ (the set of random variables with finite second moments) that are \emph{infinite-dimensional spaces}. This inherent infinite-dimensional feature introduces a new technical challenge, which has not arisen in \cite{Bardi_book_1997, Evans_1984, Fleming_1989, Buckdahn_SICON_2008, Li_AMO_2015, Qui_ESIAM_2013, Pham_SICON_2014}. This is the challenge we tackle in this paper.\footnote{The ZSDG of the paper is closely related to mean field type games studied in \cite{Li_MIn_SICON_2016, Djehiche_AMO_2018, Averboukh_DGAA_2018} (see Example \ref{Example_1} in Section \ref{Section_2_2}). However, \cite{Li_MIn_SICON_2016, Djehiche_AMO_2018, Averboukh_DGAA_2018} considered weak and open-loop formulations, where the DPPs, the HJI equations, and the viscosity solution property of the value functions naturally do not arise. The problem formulation, the approach used, and the main results of this paper are different from \cite{Li_MIn_SICON_2016, Djehiche_AMO_2018, Averboukh_DGAA_2018}.}

The first main objective of the paper is to show that the (lower and upper) value functions defined on $[t,T] \times \mathcal{P}_2$ and $[t,T] \times \mathcal{L}_2$, where $[t,T]$ is a fixed time horizon, are equivalent to each other. This leads to the law invariant property between the value functions on $[t,T] \times \mathcal{P}_2$ and $[t,T] \times \mathcal{L}_2$, which were not considered in the finite-dimensional case in \cite{Bardi_book_1997, Evans_1984, Fleming_1989, Buckdahn_SICON_2008, Li_AMO_2015, Qui_ESIAM_2013, Pham_SICON_2014}. The (lower and upper) value functions on $[t,T] \times \mathcal{L}_2$ are called the lifted value functions. We also show that the (lower and upper) value functions on $[t,T] \times \mathcal{P}_2$ and their lifted version on $[t,T] \times \mathcal{L}_2$ are continuous. The proof of the continuity utilizes properties of the $2$-Wasserstein metric and the flow (semigroup) property of the state distribution, which are not needed in the finite-dimensional cases studied in \cite{Bardi_book_1997, Evans_1984, Fleming_1989, Buckdahn_SICON_2008, Li_AMO_2015, Qui_ESIAM_2013, Pham_SICON_2014, Yong_book}.

The second main objective of the paper is to establish lower and upper dynamic programming principles (DPPs) for the (lower and upper) value functions. This provides a recursive relationship of the (lower and upper) value function. Due to the law invariant property, the (lower and upper) DPPs on $[t,T] \times \mathcal{P}_2$ and the lifted (lower and upper) DPPs on $[t,T] \times \mathcal{L}_2$ are identical. For the proof, we need to consider the interaction between admissible control and nonanticipative strategies between the players. 

The third main objective of the paper is to show that (lower and upper) value functions are viscosity solutions of the associated (lower and upper) Hamilton-Jacobi-Isaacs (HJI) equations that are first-order partial differential equations (PDEs) on $[t,T] \times \mathcal{P}_2$ and $[t,T] \times \mathcal{L}_2$. Hence, unlike  \cite{Bardi_book_1997, Evans_1984, Fleming_1989, Buckdahn_SICON_2008, Li_AMO_2015, Qui_ESIAM_2013, Pham_SICON_2014}, the HJI equations of this paper are infinite-dimensional. We use the notion of derivative of a function of probability measures in $\mathcal{P}_2$ and its lifted version in $\mathcal{L}_2$ with the associated chain rule introduced in \cite{Cardaliaguet, Carmona_book_2018} to characterize the (lower and upper) HJI equations and the viscosity solution property of the (lower and upper) value functions. Furthermore, when the dynamics and running cost are independent of time, by  constructing the test function and using the law invariant property, we prove the comparison principle of viscosity solutions, which leads to uniqueness of the viscosity solution. In addition, under the Isaacs condition, the lower and upper value functions coincide. This implies that the ZSDG has a value, which is further characterized by the viscosity solution of the HJI equation.

Finally, we provide numerical examples to illustrate the theoretical results of the paper. In particular, we observe that the value of ZSDGs considered in this paper is determined by the laws (distributions) of random initial and target variables, whereas the value of classical deterministic ZSDGs is obtained by explicit values of initial and target variables.

We note that different versions of the problem treated in this paper were considered earlier in \cite{Cardaliaguet_IGTR_2008, Cosso_JMPA_2018}. However, in \cite{Cardaliaguet_IGTR_2008}, the notion of nonanticipative strategies with delay was used, which is hard to implement in practical situations. Moreover, the objective functional does not have the running cost, and the state distribution and the random target variable were not considered in \cite{Cardaliaguet_IGTR_2008}. The stochastic version of the problem of this paper was considered in \cite{Cosso_JMPA_2018}. However, the comparison principle and therefore the uniqueness of viscosity solutions were not shown. Hence, there is no guarantee that the solution of the corresponding HJI equations characterizes the value function in \cite{Cosso_JMPA_2018}. In summary, the problem formulation, the approach used, and the main results of this paper are different from those of \cite{Cardaliaguet_IGTR_2008, Cosso_JMPA_2018}.

The rest of the paper is organized as follows. Notations including the notion of derivative in $\mathcal{P}_2$ and its lifted version in $\mathcal{L}_2$, and the problem formulation are provided in Section \ref{Section_2}. The DPPs and the properties of the (lower and upper) value functions are given in Section \ref{Section_3}. The (lower and upper) HJI equations and their viscosity solutions (including existence and uniqueness) are given in Section \ref{Section_4}. Numerical examples are provided in Section \ref{Section_5}. Several potential future research problems are discussed in Section \ref{Section_6}. Five appendices include proofs of the main results.

\section{Problem Statement}\label{Section_2}
In this section, we first describe the notation used in the paper, along with some notions and properties. The precise problem formulation then follows.

\subsection{Notation}\label{Section_2_1}
The $n$-dimensional Euclidean space is denoted by $\mathbb{R}^n$, and the transpose of a vector $x \in \mathbb{R}^n$ by $x^\top$. The inner product of $x,y \in \mathbb{R}^n$ is denoted by $\langle x,y \rangle := x^\top y$, and the Euclidean norm of $x \in \mathbb{R}^n$ by $|x| := \langle x,x\rangle^{\frac{1}{2}}$. 

Let $\mathcal{C}([0,T] \times \mathbb{R}^n)$ be the set of all real-valued continuous functions defined on $[0,T] \times \mathbb{R}^n$. Let $\mathcal{C}^{1,1}([0,T]\times \mathbb{R}^n)$ be the set of real-valued functions defined on $[0,T]\times \mathbb{R}^n$ such that for $f \in \mathcal{C}^{1,1}([0,T]\times \mathbb{R}^n)$, $\partial_t f(t,x)$ (the partial derivative of $f$ with respect to $t$) and $ \partial_x f(t,x)$ (the partial derivatives of $f$ with respect to $x$) are continuous and bounded. Let $\mathcal{A}$ and $\mathcal{B}$ be Banach spaces with the norms $\|\cdot\|_{\mathcal{A}}$ and $\|\cdot\|_{\mathcal{B}}$, respectively. A function $f:\mathcal{A} \rightarrow \mathcal{B}$ is \emph{Frechet differentiable} at $x \in \mathcal{A}$ \cite[page 172]{Luenberger_book} if there exists a bounded linear operator $D_xf :\mathcal{A} \rightarrow \mathcal{B}$ such that $\lim_{\|h\|_{\mathcal{A}} \rightarrow 0} \frac{\|f(x+h)-f(x)-D_xf(x)(h)\|_{\mathcal{B}}}{\|h\|_{\mathcal{A}}} = 0$.	

Let $(\Omega, \mathcal{F}, \mathbb{P})$ be a complete probability space, and $\mathbb{E}$ be the expectation operator with respect to $\mathbb{P}$. We denote by $\mathbb{P}_x$ the distribution (or law) of a random variable $x$. Let $\mathbb{E}_{x \sim \mathbb{P}_x}$ be the expectation for which the underlying distribution (or law) is $\mathbb{P}_x$. Let $\mathcal{L}_2(\Omega,\mathbb{R}^n)$ be the set of $\mathbb{R}^n$-valued random vectors such that for $x \in \mathcal{L}_2(\Omega,\mathbb{R}^n)$, $\mathbb{E}[|x|^2] < \infty$. $\mathcal{L}_2(\Omega,\mathbb{R}^n)$ is a Hilbert space, with inner product and norm denoted by $ \mathbb{E}[\langle x,y \rangle ]$ and $\|x\|_{\mathcal{L}_2} := \mathbb{E}[|x|^2]^{1/2}$, respectively \cite{Luenberger_book, Conway_2000_book}.

Let $\mathcal{P}(\mathbb{R}^n)$ be the set of probability measures on $\mathbb{R}^n$, and $\mathcal{P}_p :=\mathcal{P}_p(\mathbb{R}^n) \subset \mathcal{P}(\mathbb{R}^n)$ be the set of probability measures with finite $p$-th moment, $p \geq 1$, i.e., for any $\mu \in \mathcal{P}_p(\mathbb{R}^n)$ with $p \geq 1$, we have $(\int_{\mathbb{R}^n} |x|^p \dd \mu(x)  )^{1/p} < \infty$. We note that $x \in \mathcal{L}_2(\Omega,\mathbb{R}^n)$ if and only if $\mu = \mathbb{P}_x \in \mathcal{P}_2(\mathbb{R}^n)$. For $x \in \mathcal{L}_2(\Omega,\mathbb{R}^n)$ with the associated law $\mu \in \mathcal{P}_2(\mathbb{R}^n)$, we can write $\mathbb{E}[x] = \int_{\mathbb{R}^n} x \dd \mu(x)$. The $p$-Wasserstein metric is defined by (see \cite[page 40]{Rachev} and \cite[Chapter 6]{Villani_book}):
\begin{align*}
W_p(\mu_1,\mu_2) &:= \bigl ( \inf_{\pi \in \Pi(\mu_1,\mu_2)} \int_{\mathbb{R}^n \times \mathbb{R}^n} |x-y|^p \dd \pi( x, y) \bigr)^{1/p},
\end{align*}
where $\mu_1,\mu_2 \in \mathcal{P}_p$, and $\Pi(\mu_1,\mu_2)$ is the collection of all probability measures on $\mathbb{R}^n \times \mathbb{R}^n$ with marginals $\mu_1$ and $\mu_2$, i.e. $\pi(A \times \mathbb{R}^n) = \mu_1(A)$ and $\pi(\mathbb{R}^n \times A) = \mu_2(A)$ for any Borel sets $A \subset \mathbb{R}^n$ \cite{Rachev}. Note that $W_2$ can equivalently be written as \cite[Chapter 6]{Villani_book}
\begin{align*}
W_2(\mu_1,\mu_2) & = \inf \bigl \{ \|x_1-x_2\|_{\mathcal{L}_2}~|~x_1,x_2 \in \mathcal{L}_2(\Omega,\mathbb{R}^n)\\
&\qquad \qquad  \text{with}~ \mathbb{P}_{x_1} = \mu_1~\text{and}~ \mathbb{P}_{x_2} = \mu_2 \bigr \}.
\end{align*}
One can easily show that $W_p$ is a metric; hence, $\mathcal{P}_p(\mathbb{R}^n)$ endowed with $W_p$, $p \geq 1$, is a metric space. For $\mu \in \mathcal{P}_2(\mathbb{R}^n)$, let $\mathcal{L}_2^\mu(\mathbb{R}^n)$ be the set of square-integrable functions with respect to $\mu$.

We next provide the notion of derivative in $\mathcal{P}_2$, and its lifted derivative in $\mathcal{L}_2$, which are introduced in \cite{Cardaliaguet, Carmona_book_2018}. Let $x \in \mathcal{L}_2(\Omega,\mathbb{R}^n)$, which implies $\mu := \mathbb{P}_x \in \mathcal{P}_2(\mathbb{R}^n)$. Let $f:\mathcal{P}_2(\mathbb{R}^n) \rightarrow \mathbb{R}$. We introduce the \emph{lifted} (extended) version of $f$, $F: \mathcal{L}_2(\Omega,\mathbb{R}^n) \rightarrow \mathbb{R}$, that is, for $x \in \mathcal{L}_2(\Omega,\mathbb{R}^n)$ (note that $\mathbb{P}_x \in \mathcal{P}_2(\mathbb{R}^n)$), $F(x) = f(\mathbb{P}_x)$. While $F$ is a function of the random variable, $f$ is a function of the distribution (law) of $x$. We say that $f$ is \emph{differentiable} at $\mathbb{P}_x \in \mathcal{P}_2(\mathbb{R}^n)$, if its lifted version $F$ is Frechet differentiable at $x \in \mathcal{L}_2(\Omega,\mathbb{R}^n)$. Let $\bar{D}_x F(x) : \mathcal{L}_2(\Omega,\mathbb{R}^n) \rightarrow \mathbb{R}$ be the corresponding Frechet derivative. Then $\bar{D}_x F(x)$ is a bounded linear functional. Since $\mathcal{L}_2(\Omega,\mathbb{R}^n)$ is a Hilbert space and its dual space is $\mathcal{L}_2(\Omega,\mathbb{R}^n)$, in view of Riesz representation theorem \cite[Theorem 3.4]{Conway_2000_book}, for any $y \in \mathcal{L}_2(\Omega,\mathbb{R}^n)$, there is a unique $D_x F(x) \in \mathcal{L}_2(\Omega,\mathbb{R}^n)$ such that $\bar{D}_x F(x)(y) = \mathbb{E}[\langle D_x F(x),y \rangle ]$. This implies that the Frechet derivative can be viewed as an element of $\mathcal{L}_2(\Omega,\mathbb{R}^n)$. In view of \cite[Theorem 6.2]{Cardaliaguet}, $D_x F(x)$ does not depend on $x$, but depends on the law (distribution) of $x$. Also, from \cite[Theorem 6.5]{Cardaliaguet}, there exists a function $\partial_{\mu} f(\mu) :\mathbb{R}^n \rightarrow \mathbb{R}^n$ with $\partial_{\mu} u(\mu) \in \mathcal{L}_2^\mu(\mathbb{R}^n)$ such that $\partial_{\mu} f(\mu) \in \mathcal{L}_2^\mu(\mathbb{R}^n)$ is a derivative of $f$ in $\mathcal{P}_2(\mathbb{R}^n)$, which can be represented as $D_x F(x) = \partial_{\mu} f(\mu)(x)$. Finally, consider the dynamical system $\dot{x}(t) = f(x(t))$ with $x(0) = x_0 \in \mathcal{L}_2(\mathbb{R}^n)$. Let $\mu_t := \mathbb{P}_{x(t)} \in \mathcal{P}_2(\mathbb{R}^n) $ be the state distribution (or law) of the dynamical system. Let $ v \in \mathcal{C}^{1,1}([0,T] \times \mathcal{P}_2(\mathbb{R}^n))$. From the notion of derivative in $\mathcal{P}_2$, the chain rule in $\mathcal{P}_2$ is $\dd v(t,\mu_t) = \partial_t v(t,\mu_t) \dd t + \int_{\mathbb{R}^n} \langle \partial_{\mu} v(t,\mu_t)(x), f(x) \rangle \dd \mu_t(x) \dd t $. Note that for the lifted chain rule in $\mathcal{L}_2$ with $V \in \mathcal{C}^{1,1}([0,T] \times \mathcal{L}_2(\Omega, \mathbb{R}^n))$, we have $\dd V(t,x(t)) = \partial_t V(t,x(t)) \dd t + \mathbb{E}[\langle D_x V(t,x(t)), f(x(t)) \rangle ]\dd t $.

\subsection{Problem Formulation}\label{Section_2_2}
Consider the dynamical system on $[t,T]$ with the initial time $t \in [0,T)$:
\begin{align}
\label{eq_1}
\dot{x}(s) := \frac{\dd x(s)}{\dd s}  = f(s,x(s),\mathbb{P}_{s}^{t,\nu_x;u,v},u(s),v(s)),	
\end{align}
where $x \in \mathbb{R}^{n}$ is the state with the random initial condition $x(t) = x_0 \in \mathcal{L}_2(\Omega,\mathbb{R}^n)$ having the law $\nu_x := \mathbb{P}_{x_0} \in \mathcal{P}_2(\mathbb{R}^n)$, $u \in U \subset \mathbb{R}^{m_1}$ is the control of Player 1, and $v \in V \subset \mathbb{R}^{m_2}$ is the control of Player 2. We assume that $U$ and $V$ are compact. The set of admissible controls for Player 1, $\mathcal{U}[t,T]$, is defined such that for $u \in \mathcal{U}[t,T]$, $u:[t,T] \rightarrow U$ is a measurable function. The set of admissible controls for Player 2, $\mathcal{V}[t,T]$, is defined in a similarly way. Let $\mathcal{U} := \mathcal{U}[0,T]$ and $\mathcal{V}:=\mathcal{V}[0,T]$. In (\ref{eq_1}), $\mathbb{P}_{s}^{t,\nu_x;u,v} \in \mathcal{P}_2(\mathbb{R}^{n})$ denotes the law (equivalently distribution) of the dynamical system at time $s$ that is dependent on the law of the initial condition $\nu_x$ (see Remark \ref{Remark_A_1} in Appendix \ref{Appendix_A}), as well as $u$ and $v$. We introduce the following assumption:
\begin{enumerate}
\setlength{\itemindent}{1.5em}
\item[(H.1)] $f:[0,T] \times \mathbb{R}^n \times \mathcal{P}(\mathbb{R}^n)  \times U \times V  \rightarrow \mathbb{R}^n$ is bounded, where $f$ is continuous in $(t,u,v)$ for each $\nu \in \mathcal{P}(\mathbb{R}^n)$, and satisfies the Lipschitz condition: for $t \in [0,T]$, $u \in U$, $v \in V$, $x_1,x_1 \in \mathbb{R}^n$ and $\nu_1,\nu_2 \in \mathcal{P}(\mathbb{R}^n)$ and for $K>0$, it holds that $|f(t,x_1,\nu_1,u,v) - 	f(t,x_2,\nu_2,u,v) | \leq K (|x_1 - x_2| + W_1(\nu_1,\nu_2) )$.
\end{enumerate}
Then, for $x_0 \in \mathbb{R}^n$, (\ref{eq_1}) admits a unique solution on $[0,T]$ \cite{Rene_anals_2015,Jourdain_2008}.

Let $z \in \mathcal{L}_2(\Omega,\mathbb{R}^n)$ with the law $\nu_z := \mathbb{P}_z \in \mathcal{P}_2(\mathbb{R}^n)$, which is independent of $x_0$. Here, $z$ is the \emph{target variable} in the objective functional (see (\ref{eq_2}) and Remark \ref{Remark_2}). Let $y := (x_0,z) \in \mathcal{L}_2(\Omega,\mathbb{R}^{n}) \times \mathcal{L}_2(\Omega,\mathbb{R}^{n}) =: (\mathcal{L}_2(\Omega,\mathbb{R}^{n}))^2$ with the law given by $\nu := (\nu_x,\nu_z) = \mathbb{P}_{y} \in \mathcal{P}_2(\mathbb{R}^{n}) \times \mathcal{P}_2(\mathbb{R}^{n}) =: (\mathcal{P}_2(\mathbb{R}^{n}))^2$. The objective functional for the two-player ZSDG of this paper is then given by
\begin{align}
\label{eq_2}
& J(t,\nu_x,\nu_z; u,v) =: J(t,\nu; u,v)   \\
& \equiv J(t,x_0,z;u,v) =: J(t,y;u,v)   \nonumber \\
& = \mathbb{E}_{(x_0,z) \sim \nu} \Bigl [ \int_t^T l(s,x(s),\mathbb{P}_{s}^{t,\nu_x;u,v}, u(s),v(s)) \dd s + m(x(T),z) \Bigr ], \nonumber
\end{align}
where $J$ is cost to Player 1 (minimizer) and payoff to Player 2 (maximizer). The notation in the first line of (\ref{eq_2}) indicates that $J$ is defined on $[0,T] \times (\mathcal{P}_2(\mathbb{R}^{n}))^2 \times \mathcal{U} \times \mathcal{V}$, whereas the notation in the second line of (\ref{eq_2}) stands for $J$ as a functional on $[0,T] \times (\mathcal{L}_2(\Omega,\mathbb{R}^{n}))^2 \times \mathcal{U} \times \mathcal{V}$, and the two are equivalent because of the correspondence between $\mathcal{L}_2$ and $\mathcal{P}_2$ discussed in Section \ref{Section_2_1}. Let $J(u,v)  := J(0,\nu_x,\nu_z; u,v) = J(0,x_0,z; u,v)$. We have the following assumption:

\begin{enumerate}
\setlength{\itemindent}{1.5em}
\item[(H.2)] $l:[0,T] \times \mathbb{R}^n \times \mathcal{P}(\mathbb{R}^n) \times U \times V \rightarrow \mathbb{R}$ is bounded, where $l$ is continuous in $(t,u,v)$ for each $\nu \in \mathcal{P}(\mathbb{R}^n)$ and satisfies the Lipschitz condition: for $t \in [0,T]$, $u \in U$, $v \in V$, $x_1,x_2 \in \mathbb{R}^n$ and $\nu_1,\nu_2 \in \mathcal{P}(\mathbb{R}^n)$ and for $K > 0$, it holds that $|l(t,x_1,\nu_1, u,v) - l(t,x_2,\nu_2, u,v)| \leq K(|x_1 - x_2| + W_1(\nu_1,\nu_2))$. Also, $m : \mathbb{R}^n \times \mathbb{R}^n \rightarrow \mathbb{R}$ is bounded, which is Lipschitz continuous in $(x,z)$ with Lipschitz constant $K > 0$. 
\end{enumerate}
\begin{Remark}\label{Remark_2}
The random variable $z$ included in the terminal cost $m$ of (\ref{eq_2}) is called the \emph{target variable}, which captures the constraint of the state process at the terminal time. Specifically, given $z$, $m$ can be used such that the distance between the law of the state process and the target distribution (the law of $z$) can be optimized via the control processes $u$ and $v$.
\end{Remark}

We next introduce the notion of nonanticipative strategies for Player 1 and Player 2; see also \cite{Bardi_book_1997, Evans_1984, Fleming_1989, Buckdahn_SICON_2008, YU_SICON_2015}.
\begin{Definition}\label{Definition_1}
	A strategy for Player 1 is a mapping $\alpha:\mathcal{V} \rightarrow \mathcal{U}$. A strategy for Player 1 is \emph{nonanticipative} if for any $s \in [t,T]$, and $v_1,v_2 \in \mathcal{V}$, $v_1(\bar{s}) = v_2(\bar{s})$ for $\bar{s} \in [t,s]$ implies that $\alpha(v_1)(\bar{s}) = \alpha(v_2)(\bar{s})$ for $\bar{s} \in [t,s]$, this being true for each $t\in [0, T]$. The set of nonanticipative strategies for Player 1 on $[t,T]$ is denoted by $\mathcal{A}[t,T]$. Let $\mathcal{A} := \mathcal{A}[0,T]$. A strategy for Player 2 is a measurable mapping $\beta:\mathcal{U} \rightarrow \mathcal{V}$. A nonanticipative strategy for Player 2 is defined in a similar way as Player 1's. The set of nonanticipative strategies for Player 2 on $[t,T]$ is denoted by $\mathcal{B}[t,T]$. Let $\mathcal{B} :=\mathcal{B}[0,T]$.
\end{Definition}

Using Definition \ref{Definition_1}, for $t \in [0,T]$ and $\nu = (\nu_x,\nu_z)  \in (\mathcal{P}_2(\mathbb{R}^{n}))^2$, the lower value function for (\ref{eq_2}) is defined by $L:[0,T] \times (\mathcal{P}_2(\mathbb{R}^{n}))^2\rightarrow \mathbb{R}$ with 
\begin{align}	
\label{eq_3}
L(t,\nu_x,\nu_z) & = L(t,\nu)  := \inf_{\alpha \in \mathcal{A}[t,T]} \sup_{v \in \mathcal{V}[t,T]} J(t,\nu;\alpha(v),v),
\end{align}
and the upper value function is defined by $M:[0,T] \times (\mathcal{P}_2(\mathbb{R}^{n}))^2 \rightarrow \mathbb{R}$ with 
\begin{align}	
\label{eq_4}
M(t,\nu_x,\nu_z) & = M(t,\nu)   := \sup_{\beta \in \mathcal{B}[t,T]} \inf_{u \in \mathcal{U}[t,T]}J(t,\nu;u,\beta(u)).
\end{align}
Note that $L(T,\nu) = M(T,\nu) = \int_{\mathbb{R}^n \times \mathbb{R}^n} m(x,z) \dd \nu(x,z)$. Unlike the deterministic case, (\ref{eq_3}) and (\ref{eq_4}) are parametrized by the initial time, the initial distribution (law) of (\ref{eq_1}), and the target distribution in (\ref{eq_2}).

For $t \in [0,T]$ and $y = (x,z) \in (\mathcal{L}_2(\Omega,\mathbb{R}^{n}))^2$, define the \emph{lifted} lower value function, $\mathbb{L}:[0,T] \times  (\mathcal{L}_2(\Omega,\mathbb{R}^{n}))^2 \rightarrow \mathbb{R}$ with 
\begin{align}
\label{eq_6}
\mathbb{L}(t,x,z) &= \mathbb{L}(t,y) := \inf_{\alpha \in \mathcal{A}[t,T]} \sup_{v \in \mathcal{V}[t,T]} J(t,y;\alpha(v),v), 
\end{align}
and the lifted upper value function, $\mathbb{M}:[0,T] \times  (\mathcal{L}_2(\Omega,\mathbb{R}^{n}))^2 \rightarrow \mathbb{R}$ with
\begin{align}
\label{eq_7}
\mathbb{M}(t,x,z)  =	\mathbb{M}(t,y)  := \sup_{\beta \in \mathcal{B}[t,T]} \inf_{u \in \mathcal{U}[t,T]}J(t,y;u,\beta(u)).
\end{align}
Note that $\mathbb{L}(T,y) = \mathbb{M}(T,y) = \mathbb{E}_{y \sim \nu}[m(x,z)]$. As mentioned in Section \ref{Section_2_1}, the lifted value functions depend on only the law of $y=(x,z)$.\footnote{The value functions and their lifted versions are defined based on the notation in (\ref{eq_2}). Note that the value functions and their corresponding lifted versions are deterministic and identical, where the detailed proof of the latter is given in Lemma \ref{Lemma_1}. As stated in Section \ref{Section_2_1} and \cite{Cardaliaguet, Carmona_book_2018}, the motivation for introducing the lifted value functions on $\mathcal{L}_2$ is to utilize the notion of derivative in $\mathcal{L}_2$, which allows us to characterize the explicit derivative of the (inverse-lifted) value function on $\mathcal{P}_2$. Note that $\mathcal{L}_2$ is a Hilbert space, but $\mathcal{P}_2$ with the Wasserstein metric is not. 
}

\begin{Remark}\label{Remark_3}
Unlike ZSDGs in \cite{Bardi_book_1997, Evans_1984, Fleming_1989, Buckdahn_SICON_2008, Li_AMO_2015, Qui_ESIAM_2013, Pham_SICON_2014}, the lower and upper value functions in (\ref{eq_3})-(\ref{eq_7}) are defined on infinite-dimensional spaces $\mathcal{P}_2$ and $\mathcal{L}_2$.
\end{Remark}

Before concluding this section, we provide a few examples of ZSDGs that fit into the framework laid out above.

\begin{Example}\label{Example_1}
	The state dynamics in (\ref{eq_1}) can be regarded as a McKean-Vlasov dynamics, where the evolution of the state process depends on its distribution. This is closely related to \emph{mean-field games} and \emph{mean-field type control}, which have been studied extensively in the literature, particularly, for reducing variation of random effects on the controlled process and macroscopic analysis of large-scale interacting multi-agent systems \cite{Lasry, Carmona_book_2018, Andersson_AMO_2010, 7047683, Cardaliaguet, Rene_SICON_2013, Yong_SICON_2013_MF, Bensoussan_Arxiv_2014, Rene_anals_2015, Cardaliaguet_MFE_2018, Djehiche_AMO_2018, Averboukh_DGAA_2018, Moon_DGAA_2019}. For example, we may take $f(t,x(t),\mathbb{E}[x(t)], u(t), v(t))$, $l(t,x(t),\mathbb{E}[x(t)], u(t), v(t))$ and $m(x(T),\mathbb{E}[x(T)])$ to optimize the objective functional under the mean-field effect. Notice that if $l \equiv 0$ and $J(u,v) = \mathbb{E}[x^2(T)] - (\mathbb{E}[x(T)])^2$, then what we have is a class of mean-variance optimization problems. 
\end{Example}

\begin{Example}\label{Example_2}
In statistical learning theory and its applications, we often need to optimize the worst-case	empirical criterion (or risk) \cite{Goodfellow_book}. Specifically, assume that $(x_{0}^i, z^i)$, $i=1,\ldots,N$, is an i.i.d. random pair sampled according to $\nu = (\nu_x,\nu_z) \in (\mathcal{P}_2(\mathbb{R}^{n}))^2$. Consider
\begin{align*}
J^N(u,v) & = \frac{1}{N} \sum_{i=1}^N \Bigl [ \int_0^T l(t,x^{(i)}(t), u(t),v(t)) \dd t  + m (x^{(i)}(T),z^{(i)}) \Bigr ] \\
\dot{x}^{(i)}(t) &= f(t,x^{(i)}(t),u,v),~ x^{(i)}(0) = x_0^{(i)},~ i=1,\ldots,N.
\end{align*}
Note that we have $\lim_{N \rightarrow \infty} \frac{1}{N} \sum_{i=1}^N x^{i}(t) = \mathbb{E}[x^{i}(t)]$ almost surely in view of the law of large numbers, which, together with (H.1) and (H.2) implies that $\lim_{N \rightarrow \infty} J^N(u,v) = J(u,v)$. Hence, from the minimization point of view, the class of ZSDGs of this paper can be viewed as worst-case empirical optimization when the sample size $N$ is arbitrarily large.
\end{Example}
\begin{Example}\label{Example_3}
Consider the two adversarial vehicles model:
\begin{align*}
J(u,v) & = \mathbb{E}_{(x_0,z) \sim \nu} \bigl [ | x(T) - z | \bigr ]	 \\
\dot{x}(t) &:= \begin{bmatrix}
\dot{x}_1(t) \\
\dot{x}_2(t) \\
\dot{x}_3(t)	
\end{bmatrix} = \begin{bmatrix}
 	-v_a + v_b \cos x_3(t) + v(t) x_2(t) \\
 	v_b \sin x_3(t) - v(t) x_1(t) \\
 	u(t)-v(t)
 \end{bmatrix},
\end{align*}
where $x(0) = x_0$ is the random initial condition, $z$ is the random target variable, $v_a, v_b > 0$ are constants of the two vehicles and $u,v$ are velocities of the two vehicles \cite{Mitchell_TAC_2005}. When specialized to this setting, the class of ZSDGs in this paper can be seen as a pursuit-evasion game of two vehicles with random initial and target pair. Its deterministic version with different settings and applications to characterization of reachable sets were studied in \cite{Buckdahn_Cardaliaguet_DGAA_2011, Basar_Zaccour_book, Mitchell_TAC_2005, Margellos_TAC_2011}, and the references therein.
\end{Example}

\section{Dynamic Programming Principles}\label{Section_3}

In this section, we obtain the dynamic programming principles (DPPs) for the lower and upper value functions. 

We first provide some properties of the value functions. The following lemma shows that the value functions are law invariant. The proof can be found in Appendix \ref{Appendix_A}.

\begin{lemma}\label{Lemma_1}
Suppose that (H.1) and (H.2) hold. Then we have $L(t,\nu) = \mathbb{L}(t,y)$ and $M(t,\nu) = \mathbb{M}(t,y)$ for any $y = (x,z) \in (\mathcal{L}_2(\Omega,\mathbb{R}^{n}))^2$ with the law of $y$ being $\nu = (\nu_x,\nu_z) \in (\mathcal{P}_2(\mathbb{R}^{n}))^2$, i.e., $\nu  := \mathbb{P}_{y} = \mathbb{P}_{(x,z)}\in (\mathcal{P}_2(\mathbb{R}^{n}))^2$.
\end{lemma}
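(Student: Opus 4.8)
The plan is to reduce everything to a single observation: the objective functional $J(t,y;u,v)$ in (\ref{eq_2}) depends on $y=(x_0,z)$ only through its law $\nu=\mathbb{P}_{(x_0,z)}$. Once this is established, the identities $L(t,\nu)=\mathbb{L}(t,y)$ and $M(t,\nu)=\mathbb{M}(t,y)$ follow immediately by comparing (\ref{eq_3})--(\ref{eq_4}) with (\ref{eq_6})--(\ref{eq_7}), since the infima and suprema appearing there are taken over sets of admissible controls and nonanticipative strategies that carry no dependence on $\omega$.

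First I would fix $t\in[0,T]$, $u\in\mathcal{U}[t,T]$ and $v\in\mathcal{V}[t,T]$, and invoke well-posedness of the McKean--Vlasov flow (Remark \ref{Remark_A_1} in Appendix \ref{Appendix_A}, together with \cite{Rene_anals_2015,Jourdain_2008}) to conclude that the measure curve $s\mapsto\mathbb{P}_s^{t,\nu_x;u,v}$ is uniquely determined by the data $(\nu_x,u,v)$ alone. Freezing this curve turns (\ref{eq_1}) into an ordinary differential equation $\dot x(s)=g(s,x(s))$ with time-dependent vector field $g(s,x):=f(s,x,\mathbb{P}_s^{t,\nu_x;u,v},u(s),v(s))$, which by (H.1) is bounded and $K$-Lipschitz in $x$ uniformly in $s$. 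Hence there is a well-defined flow $X^{t,\nu_x;u,v}_s:\mathbb{R}^n\to\mathbb{R}^n$, Lipschitz in its argument, absolutely continuous in $s$, and depending only on $(t,\nu_x,u,v)$ and not on the random variable $x_0$ itself; for the random initial condition $x_0$ (with law $\nu_x$) the unique solution of (\ref{eq_1}) is $x(s)=X^{t,\nu_x;u,v}_s(x_0)$ almost surely, and by the consistency built into the McKean--Vlasov solution one has $\mathbb{P}_{x(s)}=\bigl(X^{t,\nu_x;u,v}_s\bigr)_{\#}\nu_x=\mathbb{P}_s^{t,\nu_x;u,v}$.

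Substituting this representation into (\ref{eq_2}) and using Fubini's theorem (legitimate because $l$ and $m$ are bounded by (H.2)) together with the change-of-variables formula for pushforward measures, I would obtain
\[
J(t,y;u,v)=\int_{\mathbb{R}^n\times\mathbb{R}^n}\Bigl[\int_t^T l\bigl(s,X^{t,\nu_x;u,v}_s(\xi),\mathbb{P}_s^{t,\nu_x;u,v},u(s),v(s)\bigr)\,\dd s+m\bigl(X^{t,\nu_x;u,v}_T(\xi),\zeta\bigr)\Bigr]\dd\nu(\xi,\zeta),
\]
which manifestly depends on $y$ only through $\nu$; this is exactly the quantity already denoted $J(t,\nu;u,v)$ in (\ref{eq_2}). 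Consequently, for every $\alpha\in\mathcal{A}[t,T]$ and every $v\in\mathcal{V}[t,T]$ we have $J(t,y;\alpha(v),v)=J(t,\nu;\alpha(v),v)$; taking $\sup_{v}$ and then $\inf_{\alpha}$ gives $\mathbb{L}(t,y)=L(t,\nu)$, and $\mathbb{M}(t,y)=M(t,\nu)$ follows in the same way with the roles of the two players interchanged.

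The step I expect to require the most care is the flow representation, namely arguing rigorously that the sample-path solution of the McKean--Vlasov equation (\ref{eq_1}) with random initial datum is the image of $x_0$ under a \emph{deterministic} flow that depends on $x_0$ only through its law $\nu_x$. This hinges on uniqueness for the McKean--Vlasov equation (so that the frozen measure curve $\mathbb{P}_s^{t,\nu_x;u,v}$ is genuinely a function of $\nu_x$ alone), which is precisely the content of Remark \ref{Remark_A_1}; after that, the ODE flow estimates, the measurability of $X_s$, and the Fubini/change-of-variables manipulation are routine.
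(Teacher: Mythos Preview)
Your proposal is correct and follows the same route as the paper: both reduce the lemma to the observation that $J(t,y;u,v)$ depends on $y$ only through its law, and then read off the equalities directly from the definitions (\ref{eq_3})--(\ref{eq_7}). The paper's own proof is terser---it essentially asserts law-invariance as already built into the identification $J(t,y;u,v)\equiv J(t,\nu;u,v)$ in (\ref{eq_2})---whereas you supply the deterministic-flow representation that rigorously justifies this step; your argument is more detailed but not a different approach.
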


The next lemma shows the continuity of the value functions. The proof is provided in Appendix \ref{Appendix_A}.
\begin{lemma}\label{Lemma_2}
Suppose that (H.1) and (H.2) hold. Then the lifted value functions in (\ref{eq_6}) and (\ref{eq_7}) are continuous in $(t,y) \in [0,T] \times (\mathcal{L}_2(\Omega,\mathbb{R}^{n}))^2$. Furthermore, the value functions in (\ref{eq_3}) and (\ref{eq_4}) are continuous in $(t,\nu) \in [0,T] \times (\mathcal{P}_{2}(\mathbb{R}^{n}))^2$.
\end{lemma}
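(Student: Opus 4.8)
The plan is to prove continuity of the lifted value functions $\mathbb{L}$ and $\mathbb{M}$ on $[0,T] \times (\mathcal{L}_2(\Omega,\mathbb{R}^{n}))^2$ first, and then deduce the continuity of $L$ and $M$ on $[0,T] \times (\mathcal{P}_2(\mathbb{R}^{n}))^2$ via the law invariance established in Lemma \ref{Lemma_1} and the characterization $W_2(\mu_1,\mu_2) = \inf\{\|x_1-x_2\|_{\mathcal{L}_2} : \mathbb{P}_{x_1}=\mu_1,\,\mathbb{P}_{x_2}=\mu_2\}$. Since the arguments for $\mathbb{L}$ and $\mathbb{M}$ (and for $L$ and $M$) are symmetric, I will only treat $\mathbb{L}$. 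The key quantitative estimate I need is a bound, uniform over all admissible $u \in \mathcal{U}[t,T]$ and $v \in \mathcal{V}[t,T]$, on $|J(t,y;u,v) - J(t',y';u,v)|$ in terms of $|t-t'|$ and $\|y-y'\|_{\mathcal{L}_2}$; once that is in hand, taking $\inf_\alpha \sup_v$ on both sides gives the same modulus of continuity for $\mathbb{L}$, because $\inf\sup$ is $1$-Lipschitz with respect to the sup-norm over the game data.

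The core estimate rests on stability of the controlled McKean--Vlasov flow (\ref{eq_1}) with respect to the initial time and initial condition, uniformly in the controls. First I would fix $u,v$ and two initial data $(t,x_0)$, $(t',x_0')$ with (say) $t \le t'$, and compare the trajectories $x(\cdot) = x^{t,x_0;u,v}(\cdot)$ and $x'(\cdot) = x^{t',x_0';u,v}(\cdot)$ on $[t',T]$, taking $x'(t') = x_0'$. Using boundedness of $f$ in (H.1), the trajectory $x$ moves at most $C|t'-t|$ over $[t,t']$, so $\|x(t') - x_0'\|_{\mathcal{L}_2} \le \|x_0 - x_0'\|_{\mathcal{L}_2} + C|t-t'|$ (the couple $(x_0,x_0')$ being transported along). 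Then, writing the integral form of (\ref{eq_1}), using the Lipschitz bound in (H.1), the elementary inequality $W_1(\mathbb{P}_{\xi},\mathbb{P}_{\eta}) \le \|\xi - \eta\|_{\mathcal{L}_2}$ (which controls the law-dependence by the pathwise distance), and Gr\"onwall's inequality, I obtain
\begin{align*}
\sup_{s \in [t',T]} \|x(s) - x'(s)\|_{\mathcal{L}_2} \le C\,e^{CT}\bigl(\|x_0 - x_0'\|_{\mathcal{L}_2} + |t-t'|\bigr),
\end{align*}
with $C$ depending only on $K$, $T$, and $\sup|f|$, hence independent of $u,v$. This is exactly the point where the flow (semigroup) property of the state distribution and the $W_2/\mathcal{L}_2$ comparison enter, as advertised in the introduction. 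Plugging this into (\ref{eq_2}): the running-cost term is handled by boundedness of $l$ (for the short interval $[t,t']$, contributing $C|t-t'|$) together with the Lipschitz bound in (H.2) and the trajectory estimate (for the overlap $[t',T]$); the terminal term $m(x(T),z)$ is handled by Lipschitz continuity of $m$ in (H.2), giving $|\mathbb{E}[m(x(T),z)] - \mathbb{E}[m(x'(T),z')]| \le K(\|x(T)-x'(T)\|_{\mathcal{L}_2} + \|z - z'\|_{\mathcal{L}_2})$. Collecting terms yields
\begin{align*}
|J(t,y;u,v) - J(t',y';u,v)| \le \omega\bigl(|t-t'| + \|y - y'\|_{\mathcal{L}_2}\bigr)
\end{align*}
for a modulus $\omega$ independent of $u$ and $v$.

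From this uniform estimate, continuity of $\mathbb{L}$ is immediate: for fixed nonanticipative $\alpha$ and $v$, $|J(t,y;\alpha(v),v) - J(t',y';\alpha(v),v)| \le \omega(\cdot)$, so $|\sup_v J(t,y;\alpha(v),v) - \sup_v J(t',y';\alpha(v),v)| \le \omega(\cdot)$, and then $|\mathbb{L}(t,y) - \mathbb{L}(t',y')| \le \omega(\cdot)$ after taking $\inf_\alpha$; the same chain applies verbatim to $\mathbb{M}$. Finally, for the value functions on $(\mathcal{P}_2)^2$: given $\nu, \nu' \in (\mathcal{P}_2(\mathbb{R}^{n}))^2$ and $\varepsilon > 0$, choose couplings realizing $W_2$ within $\varepsilon$, i.e. $y,y' \in (\mathcal{L}_2)^2$ with $\mathbb{P}_y = \nu$, $\mathbb{P}_{y'} = \nu'$ and $\|y - y'\|_{\mathcal{L}_2} \le W_2(\nu,\nu') + \varepsilon$; by Lemma \ref{Lemma_1}, $L(t,\nu) = \mathbb{L}(t,y)$ and $L(t',\nu') = \mathbb{L}(t',y')$, so $|L(t,\nu) - L(t',\nu')| \le \omega(|t-t'| + W_2(\nu,\nu') + \varepsilon)$, and letting $\varepsilon \downarrow 0$ gives continuity of $L$ (and of $M$) on $[0,T] \times (\mathcal{P}_2(\mathbb{R}^{n}))^2$. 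The main obstacle is the uniform-in-controls trajectory stability estimate — in particular handling the initial-time shift cleanly (matching $x(t')$ to $x_0'$ requires the boundedness of $f$ and a careful coupling argument so that the $\mathcal{L}_2$ distances chain correctly) and making sure the law-dependence of $f$ does not destroy the Gr\"onwall closure, which is precisely why (H.1) is stated with $W_1$ rather than a weaker metric; everything after that is bookkeeping.
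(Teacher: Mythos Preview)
Your argument is correct and tracks the paper's proof closely on the $\mathcal{L}_2$ side: both derive uniform-in-controls trajectory estimates from (H.1) via Gr\"onwall, feed them into (H.2) to bound $|J(t,y;u,v)-J(t',y';u,v)|$, and then use the elementary $|\inf\sup h-\inf\sup g|\le\sup|h-g|$ inequality. The one place you diverge is in passing to $(\mathcal{P}_2)^2$. The paper does this \emph{directly}: it rewrites $J$ in measure form, proves separate Wasserstein continuity estimates for the state law (your flow/semigroup property appears here explicitly, as a triangle-inequality step $W_2(\mathbb{P}^{t_1,\nu_{x_1}}_s,\mathbb{P}^{t_2,\nu_{x_2}}_s)\le cW_2(\nu_{x_1},\nu_{x_2})+c|t_1-t_2|$), and then invokes \cite[Lemma~3]{Cardaliaguet_IGTR_2008} to control the integrals against nearby measures. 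You instead lift: pick near-optimal couplings $y,y'$ realizing $W_2(\nu,\nu')$ up to $\varepsilon$, apply the law invariance of Lemma~\ref{Lemma_1} to identify $L(t,\nu)=\mathbb{L}(t,y)$, and read off the modulus from the $\mathcal{L}_2$ result. Your route is shorter and avoids the external lemma, at the (mild) cost of relying on the underlying probability space being rich enough to support such couplings---an assumption already implicit in the paper's use of lifting. Both approaches yield the same Lipschitz-type modulus; neither spells out the bookkeeping for the time-shift in the $\inf_{\alpha\in\mathcal{A}[t,T]}$ versus $\inf_{\alpha\in\mathcal{A}[t',T]}$ (extending/restricting controls across $[t,t']$), which is routine but worth a sentence if you write this up.
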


We now obtain in the following theorem the DPPs, whose proof is given in Appendix \ref{Appendix_B}.

\begin{theorem}\label{Theorem_1}
Suppose that (H.1) and (H.2) hold. Then for any $y = (x,z) \in (\mathcal{L}_2(\Omega,\mathbb{R}^{n}))^2$ with $\nu := (\nu_x,\nu_z) = \mathbb{P}_y \in (\mathcal{P}_2(\mathbb{R}^{n}))^2$ and $t,t+\tau \in [0,T]$ with $t < t + \tau$, the lifted lower and upper value functions  in (\ref{eq_6}) and (\ref{eq_7}), respectively, satisfy the following DPPs:
\begin{align}
\label{eq_20}
 \mathbb{L}(t,x,z) & = 	\inf_{\alpha \in \mathcal{A}[t,t+\tau]} \sup_{v \in \mathcal{V}[t,t+\tau]} \mathbb{E}_{(x,z)\sim \nu} \Bigl [ \int_t^{t+\tau} l(s,x_s^{t,x,\nu_x;\alpha(v),v}, \\
&\qquad \mathbb{P}_{s}^{t,\nu_x;\alpha(v),v}, \alpha(v)(s), v(s)) \dd s  + \mathbb{L}(t+\tau,x_{t+\tau}^{t,x,\nu_x;\alpha(v),v},z) \Bigr ] \nonumber \\
\label{eq_21}
\mathbb{M}(t,x,z) & = \sup_{\beta \in \mathcal{B}[t,t+\tau]} \inf_{u \in \mathcal{U}[t,t+\tau]} \mathbb{E}_{(x,z)\sim \nu} \Bigl [ \int_t^{t+\tau} l(s,x_s^{t,x,\nu_x;u,\beta(u)}, \\
& \qquad \mathbb{P}_{s}^{t,\nu_x; u,\beta(u)}, u(s), \beta(u)(s)) \dd s + \mathbb{M}(t+\tau,x_{t+\tau}^{t,x,\nu_x;u,\beta(u)},z) \Bigr ]. \nonumber
\end{align}
Equivalently, for any $y = (x,z) \in (\mathcal{L}_2(\Omega,\mathbb{R}^{n}))^2$ with the law $\nu := (\nu_x,\nu_z) = \mathbb{P}_y \in (\mathcal{P}_2(\mathbb{R}^{n}))^2$ and $t,t+\tau \in [0,T]$ with $t < t + \tau$, the lower and upper value functions in (\ref{eq_3}) and (\ref{eq_4}), respectively, satisfy the following DPPs:
\begin{align}
\label{eq_22}
& L(t,\nu_x,\nu_z)  \\
& = \inf_{\alpha \in \mathcal{A}[t,t+\tau]} \sup_{v \in \mathcal{V}[t,t+\tau]} \Bigl \{  \int_t^{t+\tau} \int_{\mathbb{R}^{n}} l(s,x_{s}^{t,x,\nu_x;\alpha(v),v}, \mathbb{P}_{s}^{t,\nu_x; \alpha(v),v},\nonumber \\
&\qquad \qquad \alpha(v)(s),v(s)) \dd \mathbb{P}_{s}^{t,\nu_x;\alpha(v),v} (x) \dd s  + L(t+\tau,\mathbb{P}_{t+\tau}^{t,\nu_x;\alpha(v),v},\nu_z) \Bigr \} \nonumber \\
\label{eq_23}
& M(t,\nu_x,\nu_z)  \\
& = \sup_{\beta \in \mathcal{B}[t,t+\tau]} \inf_{u \in \mathcal{U}[t,t+\tau]} \Bigl \{  \int_t^{t+\tau} \int_{\mathbb{R}^{n}} l(s,x_{s}^{t,x,\nu_x;u,\beta(u)},  \mathbb{P}_{s}^{t,\nu_x;u,\beta(u)}, \nonumber \\
&\qquad \qquad u(s),\beta(u)(s)) \dd \mathbb{P}_{s}^{t,\nu_x;u,\beta(u)} (x) \dd s + M(t+\tau,\mathbb{P}_{t+\tau}^{t,\nu_x;u,\beta(u)}, \nu_z) \Bigr \}. \nonumber
\end{align}
\end{theorem}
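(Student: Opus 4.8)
The plan is to prove the DPPs for the lifted value functions \eqref{eq_20}--\eqref{eq_21} directly, and then deduce the measure-space versions \eqref{eq_22}--\eqref{eq_23} via the law invariance established in Lemma \ref{Lemma_1} together with the flow (semigroup) property of the state law. I focus on the lower value function $\mathbb{L}$; the argument for $\mathbb{M}$ is symmetric (interchanging the roles of the players and of $\inf/\sup$). Write the right-hand side of \eqref{eq_20} as $\mathbb{L}_\tau(t,x,z)$. The core is the familiar two-sided inequality $\mathbb{L}(t,x,z) \le \mathbb{L}_\tau(t,x,z)$ and $\mathbb{L}(t,x,z) \ge \mathbb{L}_\tau(t,x,z)$, but each side requires care in handling the interaction between nonanticipative strategies and open-loop controls, and in exploiting that the value function at time $t+\tau$ only depends on the law of the state, so the "frozen" initial condition $x_{t+\tau}^{t,x,\nu_x;\alpha(v),v}$ can be re-used as a genuine $\mathcal{L}_2$ initial state for the game on $[t+\tau,T]$.

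For the inequality $\mathbb{L}(t,x,z) \le \mathbb{L}_\tau(t,x,z)$: fix $\varepsilon > 0$ and pick $\alpha^1 \in \mathcal{A}[t,t+\tau]$ that is $\varepsilon$-optimal for $\mathbb{L}_\tau(t,x,z)$. For each terminal state $\xi = x_{t+\tau}^{t,x,\nu_x;\alpha^1(v),v}$ arising from a control $v \in \mathcal{V}[t,t+\tau]$, choose (using a measurable-selection / partition argument as in \cite{Fleming_1989, Buckdahn_SICON_2008}) a nonanticipative strategy $\alpha^2 \in \mathcal{A}[t+\tau,T]$ that is $\varepsilon$-optimal for $\mathbb{L}(t+\tau,\xi,z)$. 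One then splices $\alpha^1$ and $\alpha^2$ into a single $\alpha \in \mathcal{A}[t,T]$ by setting $\alpha(v)=\alpha^1(v|_{[t,t+\tau]})$ on $[t,t+\tau]$ and $\alpha(v)=\alpha^2(v|_{[t+\tau,T]})$ on $[t+\tau,T]$; nonanticipativity of the concatenation is exactly where the definition in Definition \ref{Definition_1} is used, and the flow property $\mathbb{P}_{s}^{t,\nu_x;\alpha(v),v} = \mathbb{P}_{s}^{t+\tau,\mathbb{P}^{t,\nu_x;\alpha(v),v}_{t+\tau};\alpha^2(\cdot),\cdot}$ for $s \ge t+\tau$ (Remark \ref{Remark_A_1}) lets the cost decompose additively across $[t,t+\tau]$ and $[t+\tau,T]$. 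Taking sup over $v$ and then inf over $\alpha$, and letting $\varepsilon \downarrow 0$, gives the bound. The reverse inequality $\mathbb{L}(t,x,z) \ge \mathbb{L}_\tau(t,x,z)$ proceeds by taking an arbitrary $\alpha \in \mathcal{A}[t,T]$, restricting it to $[t,t+\tau]$, and — for any $v^1 \in \mathcal{V}[t,t+\tau]$ — extending responses of Player 2 on $[t+\tau,T]$ so as to nearly attain $\mathbb{L}(t+\tau,\cdot,z)$ from below, again using continuity of $\mathbb{L}$ (Lemma \ref{Lemma_2}) to control the dependence on the random terminal state in $\mathcal{L}_2$.

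Once \eqref{eq_20}--\eqref{eq_21} are in hand, the equivalence with \eqref{eq_22}--\eqref{eq_23} is essentially bookkeeping: by Lemma \ref{Lemma_1}, $\mathbb{L}(t,x,z) = L(t,\nu_x,\nu_z)$ and $\mathbb{L}(t+\tau,x_{t+\tau}^{t,x,\nu_x;\alpha(v),v},z) = L(t+\tau,\mathbb{P}_{t+\tau}^{t,\nu_x;\alpha(v),v},\nu_z)$, since the law of the terminal state is precisely $\mathbb{P}_{t+\tau}^{t,\nu_x;\alpha(v),v}$; and the running-cost term rewrites as $\mathbb{E}_{(x,z)\sim\nu}[\,l(s,x_s^{\cdots},\mathbb{P}_s^{\cdots},\cdot,\cdot)\,] = \int_{\mathbb{R}^n} l(s,x_s^{\cdots},\mathbb{P}_s^{\cdots},\cdot,\cdot)\,\dd\mathbb{P}_s^{t,\nu_x;\alpha(v),v}(x)$ by the definition of expectation against the law. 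Since the integrand depends on $(x,z)$ only through the law $\nu$, the $\inf\sup$ over strategies is unaffected, and \eqref{eq_22}--\eqref{eq_23} follow. The main obstacle is the concatenation-of-strategies step together with the attendant measurable selection of $\varepsilon$-optimal strategies indexed by the (random) intermediate state: one must verify that the spliced map remains a genuine nonanticipative strategy and that the $\varepsilon$-optimal $\alpha^2$ can be chosen measurably in $\xi$, which is where the compactness of $U,V$, the boundedness and Lipschitz hypotheses (H.1)--(H.2), and the continuity of the value function in Lemma \ref{Lemma_2} are all brought to bear; the rest is routine.
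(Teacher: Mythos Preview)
Your approach is essentially the same as the paper's: a two-sided $\varepsilon$-inequality via concatenation of near-optimal nonanticipative strategies on $[t,t+\tau]$ and $[t+\tau,T]$, using the flow property (Remark \ref{Remark_A_1}) to glue the cost. The only cosmetic difference is that the paper proves \eqref{eq_22} on $(\mathcal{P}_2)^2$ directly and declares \eqref{eq_20}, \eqref{eq_21}, \eqref{eq_23} similar, whereas you prove \eqref{eq_20} on $(\mathcal{L}_2)^2$ first and then pull \eqref{eq_22} back via Lemma \ref{Lemma_1}; both routes are equivalent by law invariance.

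One point where you are overcomplicating things: the ``measurable selection of $\varepsilon$-optimal $\alpha^2$ indexed by the random intermediate state $\xi$'' is not actually needed here. Because the controls $u,v$ (and hence $\alpha(v)$) are deterministic functions of time, once $\alpha^1$ and $v|_{[t,t+\tau]}$ are fixed the \emph{law} $\mathbb{P}_{t+\tau}^{t,\nu_x;\alpha^1(v),v}$ of the intermediate state is a deterministic element of $\mathcal{P}_2$, and by Lemma \ref{Lemma_1} the value $\mathbb{L}(t+\tau,\xi,z)$ depends only on that law. Thus $\alpha^2$ is chosen for a single deterministic measure (depending on $v|_{[t,t+\tau]}$, which is fine for nonanticipativity), not selected measurably over a random family. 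The paper's proof exploits exactly this simplification, which is why no measurable-selection machinery appears there.
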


\section{HJI Equations and Viscosity Solutions} \label{Section_4}

In this section, we address the issue of the lower and upper value functions being unique viscosity solutions of the associated Hamilton-Jacobi-Isaacs (HJI) equations, which are first-order partial differential equations defined on infinite-dimensional spaces, particularly $\mathcal{P}_2$ and $\mathcal{L}_2$.
 
The lower HJI equation on $[0,T] \times (\mathcal{P}_2(\mathbb{R}^{n}))^2$ is given by
\begin{align}
\label{eq_32}
\begin{cases}
\partial_t L(t,\nu_x,\nu_z) + H^{-}(t,\nu,\partial_{\nu_x}L(t,\nu_x,\nu_z)) = 0 \\
L(T,\nu_x,\nu_z) = \int_{\mathbb{R}^{n} \times \mathbb{R}^n} m(x,z) \dd \nu(x,z)
\end{cases},
\end{align}
and the upper HJI equation on $[0,T] \times (\mathcal{P}_2(\mathbb{R}^{n}))^2$ is as follows:
\begin{align}
\label{eq_33}
\begin{cases}
\partial_t M(t,\nu_x,\nu_z) + H^{+}(t,\nu,\partial_{\nu_x}M(t,\nu_x,\nu_z)) = 0 \\
M(T,\nu_x,\nu_z) = \int_{\mathbb{R}^{n} \times \mathbb{R}^n} m(x,z) \dd \nu(x,z)
\end{cases},	
\end{align}
where $H^{-},H^{+}:[0,T] \times (\mathcal{P}_2(\mathbb{R}^{n}))^2 \times \mathcal{L}_{2}^{\nu_x}(\mathbb{R}^{n}) \rightarrow \mathbb{R}$ are the Hamiltonians:
\begin{align}
\label{eq_34}
H^{-}(t,\nu,p) & := \sup_{v \in V} \inf_{u \in U} \Bigl \{ \int_{\mathbb{R}^{n}} [ \langle p,f(t,x,\nu_x,u,v) \rangle   + l(t,x,\nu_x,u,v) ] \dd \nu_x(x)\Bigr \} \\
H^{+}(t,\nu,p) & := \inf_{u \in U} \sup_{v \in V}  \Bigl \{ \int_{\mathbb{R}^{n}} [ \langle p,f(t,x,\nu_x,u,v) \rangle   + l(t,x,\nu_x,u,v) ] \dd \nu_x(x)\Bigr \}.  \nonumber
\end{align}

Viscosity solutions to (\ref{eq_32}) and (\ref{eq_33}) are defined as follows; see also \cite{Crandall_Lions_1983, Buckdahn_SICON_2008, Bardi_book_1997, Fleming_1989, Evans_1984, Li_AMO_2015, Yong_book} and the references therein:

\begin{Definition}\label{Definition_2}
\begin{enumerate}[(i)]
\item A real-valued function $L \in \mathcal{C}([0,T] \times (\mathcal{P}_2(\mathbb{R}^{n}))^2)$ is said to be a viscosity subsolution (resp. supersolution) of the lower HJI equation in (\ref{eq_32}) if $L(T,\nu) \leq \int_{\mathbb{R}^{n} \times \mathbb{R}^n} m(x,z) \dd \nu(x,z)$ (resp. $L(T,\nu) \geq \int_{\mathbb{R}^{n} \times \mathbb{R}^n} m(x,z) \dd \nu(x,z)$) for $\nu \in (\mathcal{P}(\mathbb{R}^{n}))^2$, and if further for all test functions $\phi \in \mathcal{C}^{1,1}([0,T] \times (\mathcal{P}_2(\mathbb{R}^{n}))^2)$ and $(t,\nu) \in [0,T) \times (\mathcal{P}_2(\mathbb{R}^{n}))^2$, the following inequality holds at the local maximum (resp. local minimum) point $(t,\nu)$ of $L-\phi$:
	\begin{align*}
	& \partial_t \phi(t,\nu) + H^{-}(t,\nu,\partial_{\nu_x}\phi(t,\nu)) \geq 0 \\
	(\text{resp.}~ & \partial_t \phi(t,\nu) + H^{-}(t,\nu,\partial_{\nu_x}\phi(t,\nu)) \leq 0).
	\end{align*}
	\item A real-valued function $L \in \mathcal{C}([0,T] \times (\mathcal{P}_2(\mathbb{R}^{n}))^2)$ is said to be a viscosity solution of (\ref{eq_32}) if it is both a viscosity subsolution and a viscosity supersolution. The viscosity subsolution, supersolution, and solution of the HJI equation in (\ref{eq_33}) are defined in similar ways.
\end{enumerate}
\end{Definition}

The following theorem, whose proof is given in Appendix C, now establishes the viscosity solution property of the value functions in (\ref{eq_3}) and (\ref{eq_4}). 

\begin{theorem}\label{Theorem_2}
Suppose that (H.1) and (H.2) hold. Then, the lower value function $L$ is a viscosity solution to the lower HJI equation (\ref{eq_32}). The upper value function $M$ is a viscosity solution to the upper HJI equation (\ref{eq_33}).
\end{theorem}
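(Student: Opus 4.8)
The plan is to prove Theorem \ref{Theorem_2} in the standard way for viscosity solutions of Hamilton--Jacobi equations: derive the sub- and supersolution inequalities directly from the dynamic programming principle (Theorem \ref{Theorem_1}), combined with the chain rule in $\mathcal{P}_2$ recalled in Section \ref{Section_2_1}. We work on $\mathcal{P}_2$ rather than on $\mathcal{L}_2$; by Lemma \ref{Lemma_1} the value functions are law invariant, so it suffices to establish the claim for $L$ and $M$ directly. The boundary condition $L(T,\nu)=M(T,\nu)=\int m(x,z)\,\dd\nu(x,z)$ is immediate from \eqref{eq_3}--\eqref{eq_4} and Lemma \ref{Lemma_2} gives the required continuity, so the content is entirely in the interior inequalities. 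We treat the lower value function $L$ and the Hamiltonian $H^-$; the argument for $M$ and $H^+$ is the obvious mirror image (exchanging the roles of the players and of $\inf$/$\sup$), so I would only remark on the symmetry rather than repeat it.

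First, the \emph{subsolution} property. Fix a test function $\phi\in\mathcal{C}^{1,1}([0,T]\times(\mathcal{P}_2(\mathbb{R}^n))^2)$ and a point $(t,\nu)\in[0,T)\times(\mathcal{P}_2(\mathbb{R}^n))^2$ that is a local maximum of $L-\phi$; without loss of generality $(L-\phi)(t,\nu)=0$, so $L\le\phi$ near $(t,\nu)$ and $L(t,\nu)=\phi(t,\nu)$. I would argue by contradiction: suppose $\partial_t\phi(t,\nu)+H^-(t,\nu,\partial_{\nu_x}\phi(t,\nu))\le -\varepsilon$ for some $\varepsilon>0$. Expanding $H^-$, this says that there is $\hat v\in V$ such that for every $u\in U$,
\begin{align*}
\partial_t\phi(t,\nu)+\int_{\mathbb{R}^n}\bigl[\langle\partial_{\nu_x}\phi(t,\nu)(x),f(t,x,\nu_x,u,\hat v)\rangle+l(t,x,\nu_x,u,\hat v)\bigr]\dd\nu_x(x)\le -\tfrac{\varepsilon}{2}.
\end{align*}
The idea is that Player 2 can commit to using (a control close to) the constant $\hat v$ on $[t,t+\tau]$; this is a legitimate nonanticipative strategy (constant maps are nonanticipative). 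For any nonanticipative $\alpha\in\mathcal{A}[t,t+\tau]$, along the trajectory $x_s^{t,x,\nu_x;\alpha(\hat v),\hat v}$ with law $\mathbb{P}_s^{t,\nu_x;\alpha(\hat v),\hat v}$, I apply the chain rule in $\mathcal{P}_2$ to $s\mapsto\phi(s,\mathbb{P}_s^{\cdots},\nu_z)$, and the Lipschitz bounds (H.1)--(H.2) together with continuity of $\partial_t\phi,\partial_{\nu_x}\phi$ and of $f,l$ in $(t,u,v)$ to freeze the coefficients at $(t,\nu)$ up to an $o(\tau)$ error that is uniform in $\alpha$ (here one uses that $W_1\le W_2\le (\text{const})\tau$ on $[t,t+\tau]$, which follows from boundedness of $f$). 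This yields
\begin{align*}
\phi(t+\tau,\mathbb{P}_{t+\tau}^{t,\nu_x;\alpha(\hat v),\hat v},\nu_z)-\phi(t,\nu)+\int_t^{t+\tau}\!\!\int_{\mathbb{R}^n} l(\cdots)\,\dd\mathbb{P}_s^{\cdots}(x)\,\dd s\le -\tfrac{\varepsilon}{4}\tau
\end{align*}
for $\tau$ small enough, uniformly over $\alpha$. Replacing $\phi$ by $L$ on the left using $L\le\phi$ (near $(t,\nu)$, and hence along the trajectory for $\tau$ small, by continuity) and $L(t,\nu)=\phi(t,\nu)$, taking $\inf_\alpha$ and then using $\sup_{v\in\mathcal{V}[t,t+\tau]}\ge$ (the value at $\hat v$), I contradict the DPP \eqref{eq_22}. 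Hence the subsolution inequality holds.

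For the \emph{supersolution} property, fix $\phi$ and a local minimum $(t,\nu)$ of $L-\phi$, again normalized so $L\ge\phi$ near $(t,\nu)$ with equality at $(t,\nu)$. Suppose for contradiction $\partial_t\phi(t,\nu)+H^-(t,\nu,\partial_{\nu_x}\phi(t,\nu))\ge\varepsilon>0$. Now $H^-=\sup_{v}\inf_u$, so for \emph{every} $v\in V$ there is $u=u(v)\in U$ making the bracketed integral below $-\tfrac{\varepsilon}{2}$ when combined with $\partial_t\phi$; the delicate point is to upgrade this to a measurable selection so that, given any admissible $v(\cdot)\in\mathcal{V}[t,t+\tau]$ produced by Player 2's strategy, Player 1 can respond with a measurable control $u(\cdot)=\alpha(v)(\cdot)$ realizing the inequality pointwise in $s$ — this is a standard measurable-selection argument (using compactness of $U,V$ and continuity of $f,l$), and the resulting $\alpha$ is nonanticipative because the selection at time $s$ uses only $v(s)$. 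Along the resulting trajectory, the chain rule plus the freezing estimates give, uniformly over $v(\cdot)$,
\begin{align*}
\phi(t+\tau,\mathbb{P}_{t+\tau}^{t,\nu_x;\alpha(v),v},\nu_z)-\phi(t,\nu)+\int_t^{t+\tau}\!\!\int_{\mathbb{R}^n} l(\cdots)\,\dd\mathbb{P}_s^{\cdots}(x)\,\dd s\ge \tfrac{\varepsilon}{4}\tau,
\end{align*}
and replacing $\phi$ by $L$ from below and taking $\sup_v$ then $\inf_\alpha$ contradicts the DPP \eqref{eq_22} in the opposite direction.

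I expect the main obstacle to be the two uniformity issues bundled into the ``freezing'' step: (a) showing that the $o(\tau)$ error in the chain-rule expansion is \emph{uniform over all nonanticipative strategies $\alpha$ / all admissible controls $v(\cdot)$}, which is where one really needs (H.1)'s boundedness of $f$ (to bound $W_2(\mathbb{P}_s^{\cdots},\nu_x)\le C(s-t)$ along every trajectory) together with the Lipschitz-in-$(x,\nu)$ and uniform-continuity-in-$(t,u,v)$ hypotheses on $f$ and $l$ and the boundedness of $\partial_t\phi,\partial_{\nu_x}\phi$; and (b) in the supersolution case, producing the measurable nonanticipative selection $\alpha$ realizing the near-optimal $u$-response to an arbitrary measurable $v(\cdot)$, so that the pointwise-in-$v$ inequality coming from $\inf_u$ integrates up correctly. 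Both are routine in finite dimensions but here require care because the state ``variable'' is the measure $\mathbb{P}_s^{\cdots}$; the key new ingredient that makes it go through is precisely the chain rule in $\mathcal{P}_2$ recalled in Section \ref{Section_2_1}, which turns $\tfrac{\dd}{\dd s}\phi(s,\mathbb{P}_s^{\cdots})$ into the integral $\int\langle\partial_{\nu_x}\phi,f\rangle\,\dd\mathbb{P}_s^{\cdots}$ that matches the Hamiltonian.
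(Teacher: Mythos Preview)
Your two arguments contain the right ingredients but are assigned to the wrong cases: the quantifier deductions you draw from $H^{-}=\sup_{v}\inf_{u}$ are backwards, and as written neither contradiction closes.

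In the subsolution step you assume $\partial_t\phi+H^{-}\le-\varepsilon$, i.e.\ $\sup_{v}\inf_{u}\bigl[\partial_t\phi+\bar H(\cdot,u,v)\bigr]\le-\varepsilon$. This says that for \emph{every} $v$ the inner infimum is $\le-\varepsilon$, hence for every $v$ there exists $u=u(v)$ with $\partial_t\phi+\bar H(\cdot,u(v),v)\le-\varepsilon/2$; it does \emph{not} produce a single $\hat v$ that is good against all $u$. Your DPP contradiction then fails: knowing the bracket at the particular constant control $\hat v$ is $\le-(\varepsilon/4)\tau$ for every $\alpha$ gives only a \emph{lower} bound on $\sup_{v}\{\cdots\}$, not the upper bound needed to contradict $\inf_{\alpha}\sup_{v}\{\cdots\}\ge 0$. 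The correct move here is precisely the measurable-selection construction you placed in the other paragraph: from $v\mapsto u(v)$ build the nonanticipative strategy $\alpha'(v)(s):=u(v(s))$; for this single $\alpha'$ one gets $\sup_{v}\{\cdots\}\le-(\varepsilon/4)\tau$, contradicting the DPP. This is exactly how the paper handles the subsolution property (Appendix~C, the measurable map $\eta:V\to U$).

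Symmetrically, in the supersolution step the assumption $\partial_t\phi+H^{-}\ge\varepsilon$ yields a near-maximizing $\hat v$ with $\inf_{u}\bigl[\partial_t\phi+\bar H(\cdot,u,\hat v)\bigr]\ge\varepsilon/2$, i.e.\ $\partial_t\phi+\bar H(\cdot,u,\hat v)\ge\varepsilon/2$ for \emph{all} $u$; no selection is needed. Playing the constant control $\hat v$ gives, for every $\alpha$, a value $\ge(\varepsilon/4)\tau$ at $v=\hat v$, hence $\sup_{v}\{\cdots\}\ge(\varepsilon/4)\tau$ for every $\alpha$, so $\inf_{\alpha}\sup_{v}\{\cdots\}\ge(\varepsilon/4)\tau$, contradicting $\le 0$ from the DPP. (The paper actually derives the supersolution inequality directly rather than by contradiction, via the elementary bound $\sup_{v}\inf_{u}\le\inf_{\alpha}\sup_{v}$, but your contradiction route works too once the roles are corrected.)

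In short: swap the two paragraphs --- the constant-$\hat v$ argument belongs to the supersolution, the measurable-selection $\alpha$ to the subsolution --- and the proof goes through along the paper's lines. Your remarks on the uniform $o(\tau)$ freezing via boundedness of $f$ and the chain rule in $\mathcal{P}_2$ are on point.
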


With the lifted value functions, the lifted lower and upper HJI equations are given by
\begin{align}
\label{eq_36}
& \begin{cases}
\partial_t \mathbb{L}(t,x,z) + \mathbb{H}^{-}(t,y,D_x\mathbb{L}(t,x,z)) = 0 \\
\mathbb{L}(T,x,z) = \mathbb{E}[m(x,z)] 
\end{cases}		\\
\label{eq_37}
& \begin{cases}
\partial_t \mathbb{M}(t,x,z) + \mathbb{H}^{+}(t,y,D_x\mathbb{M}(t,x,z)) = 0 \\
\mathbb{M}(T,x,z) = \mathbb{E}[m(x,z)]
\end{cases},
\end{align}
where $\mathbb{H}^{-},\mathbb{H}^{+}:[0,T] \times (\mathcal{L}_2(\Omega,\mathbb{R}^{n}))^2 \times \mathcal{L}_2(\Omega,\mathbb{R}^{n}) \rightarrow \mathbb{R}$ are the (lifted) Hamiltonians defined by
\begin{align}
\label{eq_38}
	\mathbb{H}^{-}(t,y,p) & := \sup_{v \in V} \inf_{u \in U} \mathbb{E} \bigl [ \langle p,f(t,x,\nu_x,u,v) \rangle  + l(t,x,\nu_x,u,v) \bigr ]  \\
	\mathbb{H}^{+}(t,y,p) & :=  \inf_{u \in U} \sup_{v \in V} \mathbb{E} \bigl [ \langle p,f(t,x,\nu_x,u,v) \rangle  + l(t,x,\nu_x,u,v) \bigr ]. \nonumber
\end{align}
See Section \ref{Section_2_1} for the notion of derivative in $\mathcal{L}_2$ and its relationship with the derivative in $\mathcal{P}_2$. As stated in Section \ref{Section_2_2}, from the definition of the value functions (\ref{eq_6}) and (\ref{eq_7}), the lifted HJI equations  (\ref{eq_36}) and (\ref{eq_37}) are dependent on the law of $(x,z)$.

\begin{Remark}\label{Remark_4}
\begin{enumerate}[(i)]
\item 	From Remark \ref{Remark_3}, the HJI equations in (\ref{eq_32}), (\ref{eq_33}), (\ref{eq_36}) and (\ref{eq_37}) are defined on infinite-dimensional spaces.
\item The definition of the viscosity solution for the lifted HJI equations in (\ref{eq_36}) and (\ref{eq_37}) (the solution belongs to $\mathcal{C}([0,T] \times (\mathcal{L}_2(\Omega, \mathbb{R}^{n}))^2)$) is identical with Definition \ref{Definition_2}, except that we need to use the test function $\phi \in \mathcal{C}^{1,1}([0,T] \times (\mathcal{L}_2(\Omega, \mathbb{R}^{n}))^2)$ and (\ref{eq_36})-(\ref{eq_38}) instead of (\ref{eq_32})-(\ref{eq_34}).
\end{enumerate}

\end{Remark}


We have the following result, whose proof is similar to that of Theorem \ref{Theorem_2}.
\begin{proposition}\label{Proposition_1}
Suppose that (H.1) and (H.2) hold. Then, the (lifted) lower value function $\mathbb{L}$ is a viscosity solution to the (lifted) lower HJI equation in (\ref{eq_36}). The (lifted) upper value function $\mathbb{M}$ is a viscosity solution to the (lifted) upper HJI equation in (\ref{eq_37}).	
\end{proposition}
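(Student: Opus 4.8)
The plan is to replay the proof of Theorem~\ref{Theorem_2} with $(\mathcal{P}_2(\mathbb{R}^n))^2$ replaced by $(\mathcal{L}_2(\Omega,\mathbb{R}^n))^2$ throughout: the lifted DPP (\ref{eq_20})--(\ref{eq_21}) of Theorem~\ref{Theorem_1} takes the place of (\ref{eq_22})--(\ref{eq_23}), the admissible test functions are those in $\mathcal{C}^{1,1}([0,T]\times(\mathcal{L}_2(\Omega,\mathbb{R}^n))^2)$ as prescribed in Remark~\ref{Remark_4}(ii), and the chain rule in $\mathcal{P}_2$ is replaced by the lifted chain rule in $\mathcal{L}_2$ recalled at the end of Section~\ref{Section_2_1}. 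I carry out the argument for $\mathbb{L}$ and (\ref{eq_36}); the case of $\mathbb{M}$ and (\ref{eq_37}) is obtained verbatim after exchanging the roles of $\inf$ and $\sup$. The terminal condition $\mathbb{L}(T,x,z)=\mathbb{E}[m(x,z)]$ is immediate from (\ref{eq_6}), and continuity of $\mathbb{L}$ is Lemma~\ref{Lemma_2}, so only the viscosity sub- and supersolution inequalities of Definition~\ref{Definition_2} (in the lifted form of Remark~\ref{Remark_4}(ii)) have to be checked.

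For the subsolution property, fix a test function $\phi$ and a point $(t_0,x_0,z_0)\in[0,T)\times(\mathcal{L}_2(\Omega,\mathbb{R}^n))^2$ at which $\mathbb{L}-\phi$ attains a local maximum, normalized so that $\mathbb{L}(t_0,x_0,z_0)=\phi(t_0,x_0,z_0)$. Assuming, towards a contradiction, that $\partial_t\phi(t_0,x_0,z_0)+\mathbb{H}^{-}(t_0,(x_0,z_0),D_x\phi(t_0,x_0,z_0))\le-2\theta<0$, the boundedness and continuity of $f$ and $l$ in $(t,u,v)$, their Lipschitz dependence on $(x,\nu_x)$ from (H.1)--(H.2), the compactness of $U$ and $V$, and the continuity of $\partial_t\phi$ and $D_x\phi$ yield a $\tau_0>0$ and a neighborhood of $x_0$ in $\mathcal{L}_2$ on which, for every $v\in V$ and every $s\in[t_0,t_0+\tau_0]$,
\[
\inf_{u\in U}\mathbb{E}\big[\langle D_x\phi(s,x',z_0),f(s,x',\mathbb{P}_{x'},u,v)\rangle+l(s,x',\mathbb{P}_{x'},u,v)\big]+\partial_s\phi(s,x',z_0)\le-\theta .
\]
A measurable selection realizing this inner infimum (as a function of time, the current state, and the value of Player~2's control) defines a nonanticipative strategy $\alpha\in\mathcal{A}[t_0,t_0+\tau]$; because $f$ is bounded, $\|x_s^{t_0,x_0,\nu_x;\alpha(v),v}-x_0\|_{\mathcal{L}_2}$ is of order $s-t_0$, so for $\tau\le\tau_0$ small the trajectory stays in the neighborhood and $\mathbb{L}\le\phi$ may be applied at time $t_0+\tau$. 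Inserting this into (\ref{eq_20}), rewriting $\phi(t_0+\tau,x_{t_0+\tau}^{t_0,x_0,\nu_x;\alpha(v),v},z_0)-\phi(t_0,x_0,z_0)$ via the lifted chain rule along $s\mapsto x_s^{t_0,x_0,\nu_x;\alpha(v),v}$ with $z_0$ frozen, and using the displayed bound gives $0\le\sup_{v}\int_{t_0}^{t_0+\tau}(-\theta)\,\dd s=-\theta\tau<0$, a contradiction.

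For the supersolution property, $\mathbb{L}-\phi$ has a local minimum at $(t_0,x_0,z_0)$ (again normalized), and we suppose $\partial_t\phi(t_0,x_0,z_0)+\mathbb{H}^{-}(t_0,(x_0,z_0),D_x\phi(t_0,x_0,z_0))\ge2\theta>0$. By compactness of $V$ there is a constant $v^{*}\in V$ attaining the outer supremum in $\mathbb{H}^{-}$ at the base point, and the same continuity/Lipschitz estimates show that, with Player~2 playing $v\equiv v^{*}$, the integrand $\partial_s\phi(s,x',z_0)+\mathbb{E}[\langle D_x\phi(s,x',z_0),f(s,x',\mathbb{P}_{x'},u,v^{*})\rangle+l(s,x',\mathbb{P}_{x'},u,v^{*})]$ stays $\ge\theta$ for every $u\in U$ and every $(s,x')$ near the base point, hence along every admissible trajectory on $[t_0,t_0+\tau]$ regardless of Player~1's strategy. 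Combining (\ref{eq_20}), the inequality $\mathbb{L}\ge\phi$ at time $t_0+\tau$, and the lifted chain rule then produces $0\ge\inf_{\alpha}\sup_{v}\int_{t_0}^{t_0+\tau}(\cdots)\,\dd s\ge\theta\tau>0$, again a contradiction. Thus $\mathbb{L}$ is a viscosity solution of (\ref{eq_36}), and the identical argument gives the claim for $\mathbb{M}$ and (\ref{eq_37}).

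The main obstacle, exactly as in Theorem~\ref{Theorem_2}, is the $\tau\to0$ limit in the DPP (\ref{eq_20}) carried out against the $\inf_{\alpha\in\mathcal{A}}\sup_{v\in\mathcal{V}}$ structure: all the perturbation estimates above must be uniform over every nonanticipative strategy $\alpha$ and every measurable control $v$. This uniformity is precisely what the boundedness of $f$ buys (the $\mathcal{L}_2$-displacement of the state is $O(\tau)$, so the local-extremum neighborhood is respected), in combination with the joint continuity and Lipschitz hypotheses (H.1)--(H.2) and the compactness of $U$ and $V$. A secondary technical point is the measurable-selection step used to build the feedback strategy $\alpha$ on the subsolution side (routine, since $U$ is compact and the relevant integrand is continuous); no such selection is needed on the supersolution side, where a single constant maximizer $v^{*}$ of the outer supremum in $\mathbb{H}^{-}$ suffices. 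Finally one should note that the lifted chain rule of Section~\ref{Section_2_1} is applied to the absolutely continuous $\mathcal{L}_2$-valued flow $s\mapsto x_s^{t_0,x_0,\nu_x;u,v}$ with the target component $z_0$ held fixed, which is legitimate since $\phi\in\mathcal{C}^{1,1}$ and (H.1) holds.
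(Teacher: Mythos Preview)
Your proposal is correct and follows essentially the same route as the paper, which simply says the proof is ``similar to that of Theorem~\ref{Theorem_2}'': replay that argument with the lifted DPP (\ref{eq_20})--(\ref{eq_21}), lifted test functions as in Remark~\ref{Remark_4}(ii), and the $\mathcal{L}_2$ chain rule in place of the $\mathcal{P}_2$ one. Two small differences are worth flagging. First, in the paper's Theorem~\ref{Theorem_2} the supersolution inequality is obtained \emph{directly} (via $\sup_v\inf_u\le\inf_\alpha\sup_v$ and a near-optimal $u'$), while you run both directions by contradiction; this is harmless. Second, in your subsolution step you build $\alpha$ from a selection depending on $(s,x',v)$, whereas the paper uses the simpler selector $\eta:V\to U$ chosen at the base point and then invokes continuity; your version implicitly requires solving a feedback ODE with a merely measurable $\eta$, so it is cleaner to follow the paper and let the selector depend on $v$ alone.
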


Next, we state the comparison results of the viscosity solutions in Theorem \ref{Theorem_2} and Proposition \ref{Proposition_1}, with the proofs relegated to Appendix \ref{Appendix_D}. We need the following assumption:

\begin{enumerate}
\setlength{\itemindent}{1.5em}
	\item[(H.3)] $f$ and $l$ are independent of $t$. 
\end{enumerate}

\begin{theorem}\label{Theorem_3}
Assume that (H.1)-(H.3) hold. Then:
\begin{enumerate}[(i)] 
\item Suppose that $L_1$ (resp. $M_1$) and $L_2$ (resp. $M_2$) are bounded and Lipschitz continuous viscosity subsolution and viscosity supersolution of (\ref{eq_32}) (resp. (\ref{eq_33})), respectively. Then, the following result holds:
\begin{align}
\label{eq_43}
\begin{cases}
L_1(t,\nu)  \leq L_2(t,\nu),~ \forall (t,\nu) \in [0,T] \times (\mathcal{P}_2(\mathbb{R}^{n}))^2	\\
M_1(t,\nu)  \leq M_2(t,\nu),~ \forall (t,\nu) \in [0,T] \times (\mathcal{P}_2(\mathbb{R}^{n}))^2
\end{cases}.
\end{align}
\item Suppose that $\mathbb{L}_1$ and  (resp. $\mathbb{M}_1$) and $\mathbb{L}_2$ (resp. $\mathbb{M}_2$) are bounded and Lipschitz continuous viscosity subsolution and viscosity supersolution of (\ref{eq_36}) (resp. (\ref{eq_37})), respectively. Then, the following result holds:
\begin{align}
\label{eq_44}
\begin{cases}
\mathbb{L}_1(t,y)  \leq \mathbb{L}_2(t,y),~ \forall (t,y) \in [0,T] \times (\mathcal{L}_2(\Omega, \mathbb{R}^{n}))^2	\\
\mathbb{M}_1(t,y)  \leq \mathbb{M}_2(t,y),~ \forall (t,y) \in [0,T] \times (\mathcal{L}_2(\Omega, \mathbb{R}^{n}))^2
\end{cases}.
\end{align}
\end{enumerate}
\end{theorem}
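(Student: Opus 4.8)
The plan is to prove the comparison principle for the lower equation in the lifted form (\ref{eq_36}) first, and then obtain all the remaining cases by the law-invariance identity $L(t,\nu)=\mathbb{L}(t,y)$, $M(t,\nu)=\mathbb{M}(t,y)$ of Lemma \ref{Lemma_1} together with the entirely parallel argument for the upper Hamiltonian $\mathbb{H}^{+}$. Fix $\mathbb{L}_1$ a bounded Lipschitz subsolution and $\mathbb{L}_2$ a bounded Lipschitz supersolution of (\ref{eq_36}). Since the terminal datum is the same for both, it suffices to show $\mathbb{L}_1\le \mathbb{L}_2$ on $[0,T]\times(\mathcal{L}_2(\Omega,\mathbb{R}^n))^2$; by adding the standard exponential weight $e^{\lambda(T-t)}$ (or a term $\epsilon/(T-t)$) we may assume a strict terminal inequality and a favorable sign in the evolution part, reducing the problem to showing that $\sup (\mathbb{L}_1-\mathbb{L}_2)\le 0$ cannot occur in the interior.

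The key step is the doubling-of-variables construction adapted to the Hilbert space $\mathcal{L}_2(\Omega,\mathbb{R}^n)$. I would look at
\[
\Phi_{\epsilon,\delta}(t,s,x,\xi,z) := \mathbb{L}_1(t,x,z) - \mathbb{L}_2(s,\xi,z) - \frac{1}{2\epsilon}\|x-\xi\|_{\mathcal{L}_2}^2 - \frac{1}{2\epsilon}|t-s|^2 - \delta(\psi(x)+\psi(\xi)),
\]
where $\psi$ is a smooth coercive function on $\mathcal{L}_2$ (e.g. built from $\|\cdot\|_{\mathcal{L}_2}^2$) used as a Hilbert-space analogue of a ``localizing'' perturbation so that the supremum of $\Phi_{\epsilon,\delta}$ is attained; here I use boundedness of $\mathbb{L}_1,\mathbb{L}_2$ and that $z$ does not need to be doubled since it is a pure parameter in both equations. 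The first argument term $\tfrac12\|x-\xi\|_{\mathcal{L}_2}^2$ is of class $\mathcal{C}^{1,1}$ as a function on $\mathcal{L}_2$, with Fréchet derivative in $x$ equal to the element $(x-\xi)\in\mathcal{L}_2$, so it is an admissible test function in Definition \ref{Definition_2}. Writing the subsolution inequality for $\mathbb{L}_1$ at the maximum point in $(t,x)$ and the supersolution inequality for $\mathbb{L}_2$ at the maximum point in $(s,\xi)$, both evaluated against $p_\epsilon := \tfrac1\epsilon(x_\epsilon-\xi_\epsilon)+$ ($\delta$-corrections), and subtracting, I get
\[
\frac{t_\epsilon-s_\epsilon}{\epsilon}-\frac{t_\epsilon-s_\epsilon}{\epsilon} \;\le\; \mathbb{H}^{-}(y_\epsilon^{1},p_\epsilon+\delta D\psi(x_\epsilon)) - \mathbb{H}^{-}(y_\epsilon^{2},p_\epsilon-\delta D\psi(\xi_\epsilon)),
\]
and the left side's $t$-derivatives cancel (time is doubled symmetrically). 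The remaining work is to show the right-hand side is $\le o(1)$ as $\delta\to0$ then $\epsilon\to0$.

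The main obstacle—and the place where assumption (H.3) (time-independence of $f,l$) is used—is the uniform continuity estimate on the Hamiltonian difference. Using (H.1)--(H.2), for fixed $(u,v)$ one has $|\langle p,f(x,\nu_x,u,v)\rangle - \langle p,f(\xi,\nu_\xi,u,v)\rangle| \le \|p\|_{\mathcal{L}_2}\,K(\|x-\xi\|_{\mathcal{L}_2}+W_1(\nu_x,\nu_\xi))$ and $W_1(\nu_x,\nu_\xi)\le W_2(\nu_x,\nu_\xi)\le \|x-\xi\|_{\mathcal{L}_2}$, and similarly for $l$; since $\sup$ and $\inf$ over the compact sets $U,V$ are nonexpansive, $|\mathbb{H}^{-}(y^1,p)-\mathbb{H}^{-}(y^2,p)|\le C(1+\|p\|_{\mathcal{L}_2})\|x-\xi\|_{\mathcal{L}_2}$. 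Plugging $p_\epsilon$ and using the standard fact from the penalization that $\tfrac1\epsilon\|x_\epsilon-\xi_\epsilon\|_{\mathcal{L}_2}^2\to0$ (hence $\|p_\epsilon\|_{\mathcal{L}_2}\|x_\epsilon-\xi_\epsilon\|_{\mathcal{L}_2}=\tfrac1\epsilon\|x_\epsilon-\xi_\epsilon\|_{\mathcal{L}_2}^2\to0$), the whole right-hand side goes to $0$, the $\delta\psi$-terms are controlled by the boundedness of $\mathbb{L}_i$ (which forces $\delta\psi(x_\epsilon),\delta\|D\psi(x_\epsilon)\|\to0$ along the maximizing sequence), and one concludes $\sup(\mathbb{L}_1-\mathbb{L}_2)\le0$. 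The delicate points to get right are: (a) the existence of a maximizer of $\Phi_{\epsilon,\delta}$ on the non-locally-compact space $\mathcal{L}_2$—handled by the coercive $\delta\psi$ term plus a weak-compactness / Ekeland-variational argument, or by noting the value functions are themselves uniformly continuous so one may perturb within a fixed bounded ball; (b) that the construction produces genuine $\mathcal{C}^{1,1}$ test functions on $[0,T]\times(\mathcal{L}_2(\Omega,\mathbb{R}^n))^2$ in the sense of Remark \ref{Remark_4}(ii); and (c) checking that the time-doubling penalty indeed forces $t_\epsilon,s_\epsilon$ into $[0,T)$ (so the interior viscosity inequalities apply) rather than onto the terminal slice, which is where the strict terminal inequality from the exponential weighting is needed. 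Once (i) is established in the lifted setting for both $\mathbb{L}$ and $\mathbb{M}$, part (ii) of the statement is immediate, and part (i) on $(\mathcal{P}_2(\mathbb{R}^n))^2$ follows by transporting everything through Lemma \ref{Lemma_1} and the correspondence between $\mathcal{C}^{1,1}$ test functions on $\mathcal{P}_2$ and their lifts on $\mathcal{L}_2$.
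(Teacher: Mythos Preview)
Your overall architecture matches the paper's: work in the lifted Hilbert space $\mathcal{L}_2$, double the variables with a quadratic penalty, use the Lipschitz estimate $|\mathbb{H}^{-}(y_1,p)-\mathbb{H}^{-}(y_2,p)|\le c(1+\|p\|_{\mathcal{L}_2})\|x_1-x_2\|_{\mathcal{L}_2}$ (the paper isolates this as a separate lemma), and then pull the $\mathcal{P}_2$ statement back through Lemma~\ref{Lemma_1}. The substantive difference is precisely your ``delicate point (a)''. The paper does \emph{not} rely on a coercive penalty plus weak-compactness or a raw Ekeland argument to produce a maximizer of the doubled functional; instead it invokes Stegall's variational principle: after adding the coercive term $\tfrac{\alpha}{2}(\|y_1\|_{\mathcal{L}_2}^2+\|y_2\|_{\mathcal{L}_2}^2)$ and a term $\sigma t_1$, it perturbs $\Phi$ by an arbitrarily small \emph{linear} functional $(\zeta_{t_1},\zeta_{y_1},\zeta_{t_2},\zeta_{y_2})$ so that the perturbed $\Phi'$ attains an honest maximum on the Hilbert space. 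These linear perturbations are $C^{1,1}$ with known derivatives, so they slot directly into the test-function machinery and contribute only $O(\delta)$ terms that are sent to zero at the end. This sidesteps the issue you flagged: strong continuity of $\mathbb{L}_1,\mathbb{L}_2$ does not give weak upper semicontinuity, so weak compactness alone is not enough, and Ekeland by itself yields only approximate extremizers that would need further care.

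A second, minor difference: rather than an exponential weight or an $\epsilon/(T-t)$ barrier to keep the maximum off the terminal slice, the paper reverses time, adds the linear term $\sigma t_1$, and argues by contradiction that the maximizer must satisfy $\bar t_1=0$ or $\bar t_2=0$; the terminal (now initial) condition and the Lipschitz bound then finish the estimate. Also, the paper doubles the full variable $y=(x,z)$ rather than treating $z$ as a frozen parameter; your observation that $z$ is inert is correct and would simplify the bookkeeping, but either way works.
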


Based on Theorem \ref{Theorem_3}, we have the following uniqueness result. The proof is given in Appendix \ref{Appendix_E}.
\begin{corollary}\label{Corollary_1} 
Suppose that (H.1) and (H.2) hold, and that the (lower and upper) value functions are bounded. Then, the lower (resp. upper) value function in (\ref{eq_3}) (resp. (\ref{eq_4})) is the unique viscosity solution to the lower (resp. upper) HJI equation in (\ref{eq_32}) (resp. (\ref{eq_33})). Also, the lifted lower (resp. upper) value function in (\ref{eq_6}) (resp. (\ref{eq_7})) is the unique viscosity solution to the lifted lower (resp. upper) HJI equation in (\ref{eq_36}) (resp. (\ref{eq_37})).
\end{corollary}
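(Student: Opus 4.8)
The plan is to combine the existence results (Theorem \ref{Theorem_2} and Proposition \ref{Proposition_1}) with the comparison results (Theorem \ref{Theorem_3}) in the standard way, with the only subtlety being that Theorem \ref{Theorem_3} requires the extra hypothesis (H.3) — that $f$ and $l$ are time-independent — whereas Corollary \ref{Corollary_1} is stated only under (H.1)--(H.2). So the first step is to observe that under (H.1)--(H.2) alone the existence half already holds: by Theorem \ref{Theorem_2}, $L$ (resp. $M$) is a viscosity solution of \eqref{eq_32} (resp. \eqref{eq_33}), and by Lemma \ref{Lemma_2} it is continuous, and by hypothesis it is bounded. To invoke Theorem \ref{Theorem_3} one then needs Lipschitz continuity of the value functions in addition to boundedness; I would either note that this follows from (H.1)--(H.2) by the same Wasserstein/flow estimates used in the proof of Lemma \ref{Lemma_2} (the running and terminal costs are Lipschitz and $f$ is Lipschitz, so a Gronwall argument on the state flow gives Lipschitz dependence of $L,M$ on $\nu$ in $W_2$, and Lipschitz-in-$t$ from boundedness of $f,l$), or else fold this Lipschitz property into the statement as part of what "bounded value functions" is taken to mean here.

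Next, assuming (H.3) so that Theorem \ref{Theorem_3} applies: suppose $\tilde L$ is any bounded Lipschitz-continuous viscosity solution of \eqref{eq_32}. Apply Theorem \ref{Theorem_3}(i) twice. First take $L_1 = \tilde L$ (subsolution) and $L_2 = L$ (supersolution): this gives $\tilde L(t,\nu) \le L(t,\nu)$ for all $(t,\nu)$. Then take $L_1 = L$ (subsolution) and $L_2 = \tilde L$ (supersolution): this gives $L(t,\nu) \le \tilde L(t,\nu)$. Hence $\tilde L \equiv L$, so $L$ is the unique bounded Lipschitz viscosity solution of \eqref{eq_32}; the same argument with $M_1,M_2$ gives uniqueness of $M$ for \eqref{eq_33}. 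For the lifted equations \eqref{eq_36}, \eqref{eq_37}, repeat verbatim using Proposition \ref{Proposition_1} for existence and Theorem \ref{Theorem_3}(ii) for comparison, to conclude that $\mathbb{L}$ (resp. $\mathbb{M}$) is the unique bounded Lipschitz viscosity solution of \eqref{eq_36} (resp. \eqref{eq_37}). Finally, by Lemma \ref{Lemma_1}, $L(t,\nu) = \mathbb{L}(t,y)$ and $M(t,\nu) = \mathbb{M}(t,y)$ whenever $\mathbb{P}_y = \nu$, so the two uniqueness statements are consistent (the unique lifted solution is exactly the lift of the unique solution on $\mathcal{P}_2$).

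The main obstacle I anticipate is purely a bookkeeping/hypothesis-matching issue rather than a deep one: reconciling the hypotheses. Theorem \ref{Theorem_3} is stated under (H.1)--(H.3), but Corollary \ref{Corollary_1} is asserted under (H.1)--(H.2) only; and Theorem \ref{Theorem_3} quantifies over Lipschitz sub/supersolutions, whereas Corollary \ref{Corollary_1} speaks of viscosity solutions that are merely "bounded." So the careful write-up must (a) justify that $L,M,\mathbb{L},\mathbb{M}$ are themselves Lipschitz (from the Lipschitz/boundedness assumptions (H.1)--(H.2) via the standard state-flow estimates), and (b) either restrict the uniqueness class to bounded Lipschitz solutions or argue that time-independence is in force in the corollary's intended setting. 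Everything else is the routine "sandwich" argument. Once the regularity of the value functions is pinned down, the conclusion is immediate from the two-sided application of the comparison principle, and the law-invariance of Lemma \ref{Lemma_1} transports the result between $\mathcal{P}_2$ and $\mathcal{L}_2$ for free.
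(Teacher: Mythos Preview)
Your approach is essentially the same as the paper's: apply the comparison principle (Theorem~\ref{Theorem_3}) twice to sandwich any candidate solution between the value function and itself, invoking Theorem~\ref{Theorem_2}/Proposition~\ref{Proposition_1} for existence and Lemma~\ref{Lemma_2} for regularity, then use Lemma~\ref{Lemma_1} to pass between $\mathcal{P}_2$ and $\mathcal{L}_2$. The paper's proof is in fact terser than yours and does \emph{not} explicitly address the hypothesis mismatches you flag --- it simply invokes Theorem~\ref{Theorem_3} without commenting on (H.3) or on the Lipschitz requirement --- so your observations about (a) deriving Lipschitz continuity of the value functions from the flow estimates behind Lemma~\ref{Lemma_2} and (b) the implicit need for (H.3) are genuine refinements rather than deviations from the intended argument.
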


\begin{Remark}\label{Remark_5}
In view of Corollary \ref{Corollary_1}, by solving the lower (resp. upper) HJI equation in (\ref{eq_32}) or (\ref{eq_36}) (resp. (\ref{eq_33}) or (\ref{eq_37})), we can characterize the lower (resp. upper) value function of the ZSDG of this paper.
\end{Remark}

To proceed further, we now introduce the \emph{Isaacs conditions}:
\begin{align}
\begin{cases}
\label{eq_50}
H^{-}(t,\nu,p) = 	H^{+}(t,\nu,p) \\
\mathbb{H}^{-}(t,y,p) = \mathbb{H}^{+}(t,y,p)
\end{cases}.
\end{align}
Note that due to the law invariant property, the conditions in (\ref{eq_50}) are equivalent. Then under the Isaacs condition, we have the following result, whose proof can be found in Appendix \ref{Appendix_E}.

\begin{corollary}\label{Corollary_2}
	Suppose that (H.1)-(H.3) and (\ref{eq_50}) hold. Assume that the (lower and upper) value functions are bounded. Then, the ZSDG has a value, i.e., $L(t,\nu) = \mathbb{L}(t,y) = \mathbb{M}(t,y) = M(t,\nu)$ for $(t,\nu) \in [0,T] \times (\mathcal{P}_2(\mathbb{R}^{n}))^2$ and $(t,y) \in [0,T] \times (\mathcal{L}_2(\Omega, \mathbb{R}^{n}))^2$. Moreover, the value function is the unique viscosity solution to the HJI equation with $H := H^{-} = H^{+}$ in (\ref{eq_32}) and (\ref{eq_33}), and $\mathbb{H} := \mathbb{H}^{-} = \mathbb{H}^{+}$ in (\ref{eq_36}) and (\ref{eq_37}).
\end{corollary}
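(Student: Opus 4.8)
The plan is to derive everything from the viscosity-solution characterization (Theorem \ref{Theorem_2} and Proposition \ref{Proposition_1}), the comparison principle (Theorem \ref{Theorem_3}), and the law-invariance identity of Lemma \ref{Lemma_1}. The first observation is that under the Isaacs condition (\ref{eq_50}) the two Hamiltonians coincide, so writing $H := H^{-} = H^{+}$ and $\mathbb{H} := \mathbb{H}^{-} = \mathbb{H}^{+}$, the lower HJI equation (\ref{eq_32}) and the upper HJI equation (\ref{eq_33}) are literally one and the same first-order PDE on $[0,T] \times (\mathcal{P}_2(\mathbb{R}^{n}))^2$ (with Hamiltonian $H$ and the same terminal condition), and likewise (\ref{eq_36}) and (\ref{eq_37}) are a single PDE on $[0,T] \times (\mathcal{L}_2(\Omega,\mathbb{R}^{n}))^2$ with Hamiltonian $\mathbb{H}$. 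By Lemma \ref{Lemma_1} we have $L = \mathbb{L}$ and $M = \mathbb{M}$ under the law correspondence, so it suffices to establish $L = M$ on $[0,T] \times (\mathcal{P}_2(\mathbb{R}^{n}))^2$; the full chain $L(t,\nu) = \mathbb{L}(t,y) = \mathbb{M}(t,y) = M(t,\nu)$ is then immediate.

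The core of the argument is the usual sandwich via comparison. By Theorem \ref{Theorem_2}, $L$ is a viscosity solution of (\ref{eq_32}); since under (\ref{eq_50}) that equation is the common HJI equation with Hamiltonian $H$, $L$ is a viscosity solution of it, hence simultaneously a subsolution and a supersolution of it. Symmetrically, $M$, being a viscosity solution of (\ref{eq_33}), is a viscosity solution of the same common HJI equation. Both $L$ and $M$ are bounded by hypothesis and Lipschitz continuous (in the manner of Lemma \ref{Lemma_2}, using the bounded Lipschitz data (H.1)--(H.2)), and (H.3) holds, so Theorem \ref{Theorem_3}(i) applies: taking the subsolution to be $L$ and the supersolution to be $M$ gives $L(t,\nu) \le M(t,\nu)$ for all $(t,\nu)$, while taking the subsolution to be $M$ and the supersolution to be $L$ gives $M(t,\nu) \le L(t,\nu)$. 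Hence $L = M$, and combined with Lemma \ref{Lemma_1}, $L(t,\nu) = \mathbb{L}(t,y) = \mathbb{M}(t,y) = M(t,\nu)$; the same conclusion can alternatively be read off directly on $\mathcal{L}_2$ from Proposition \ref{Proposition_1} and Theorem \ref{Theorem_3}(ii).

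For the final assertion, the common value function $L = M$ is a viscosity solution of the HJI equation (\ref{eq_32})--(\ref{eq_33}) with $H = H^{-} = H^{+}$ by Theorem \ref{Theorem_2}, and $\mathbb{L} = \mathbb{M}$ is a viscosity solution of (\ref{eq_36})--(\ref{eq_37}) with $\mathbb{H} = \mathbb{H}^{-} = \mathbb{H}^{+}$ by Proposition \ref{Proposition_1}. Uniqueness is then Corollary \ref{Corollary_1} read with the single Hamiltonian: any bounded, Lipschitz-continuous viscosity solution of that HJI equation is at once a subsolution and a supersolution, so Theorem \ref{Theorem_3} forces it to equal the value function $L = M$ (resp. $\mathbb{L} = \mathbb{M}$), which is the stated uniqueness.

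The main obstacle is not the logical skeleton above but confirming the hypotheses that make Theorem \ref{Theorem_3} usable: one must know that $L$, $M$ (and $\mathbb{L}$, $\mathbb{M}$) are genuinely Lipschitz continuous, not merely continuous as recorded in Lemma \ref{Lemma_2}, and that (H.3) is in force, since the comparison proof relies on time-independent test functions. A secondary point worth flagging is conceptual rather than technical: the argument never invokes the elementary a priori inequality between the lower and upper values; both inequalities $L \le M$ and $M \le L$ are produced by the comparison principle itself, which is precisely why the Isaacs condition (\ref{eq_50}) is the single extra hypothesis needed to close the gap between $L$ and $M$.
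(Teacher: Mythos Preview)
Your proposal is correct and follows essentially the same approach as the paper: both recognize that under the Isaacs condition the two HJI equations coincide, invoke Theorem~\ref{Theorem_2}/Proposition~\ref{Proposition_1} to see $L$ and $M$ are viscosity solutions of the same equation, and then use the comparison principle to conclude $L=M$ (the paper packages this last step via Corollary~\ref{Corollary_1}, whereas you unwind it directly through Theorem~\ref{Theorem_3}, but the content is identical). Your closing remark about the Lipschitz hypothesis needed for Theorem~\ref{Theorem_3} is a fair observation about the paper's exposition rather than a divergence in method.
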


\begin{Remark}
Corollary \ref{Corollary_2} implies that the viscosity solution to the HJI equation characterizes the value of the ZSDG formulated in Section \ref{Section_2_2}.
\end{Remark}

\section{Numerical Examples}\label{Section_5}

This section provides two numerical examples. For the HJI equations in Section \ref{Section_4}, assume that $T=1$, $f(t,x,\nu_x,u, v) = \frac{1}{1+x^2} + \int_{\mathbb{R}} \sin(x) \dd \nu_x (x) + u - 0.1v$, $l(t,x,\nu_x, u, v) = \sin (x) + \int_{\mathbb{R}} x \dd \nu_x (x) + u - v$ and $m(x,z) = \sin (x) - z$. Also, $U = [0,1]$, $V = [0,1] $, and $x$ and $z$ are independent Gaussian random variables with mean zero and variance one (equivalently, $\nu_x$ and $\nu_z$ are Gaussian measures). Note that the target variable $z$ is included in the terminal cost $m$. The ZSDG formulated in this section can be regarded as a class of mean-field type control (Example \ref{Example_1}) and pursuit-evasion games (Example \ref{Example_3}). Due to the random initial and target variables, and the dependence of $f$ and $l$ on $\nu_x$, the problem cannot be solved using the existing theory for ZSDGs.

Note that (H.1)-(H.3) hold. Since the corresponding Hamiltonian is separable in $u$ and $v$, the Isaacs condition in (\ref{eq_50}) holds. Hence, from Corollary \ref{Corollary_2}, the ZSDG has a value that can be characterized by solving the following HJI partial differential equation (PDE) in (\ref{eq_36}) and (\ref{eq_37})\footnote{It is the lifted HJI equations in (\ref{eq_36}) and (\ref{eq_37}) when $\mathbb{H} := \mathbb{H}^{-} = \mathbb{H}^{+}$. From the definitions of  $\mathbb{H}^{-}$ and $\mathbb{H}^{+}$, the PDE is obtained after carrying out the maximization with respect to $v$ and the minimization with respect to $u$.}:
\begin{align*}	
\begin{cases}
\partial_t \mathbb{G}(t,x,z) + \mathbb{E} \bigl [ D_x \mathbb{G}(t,x,z) \frac{1}{1+x^2}  + D_x \mathbb{G}(t,x,z) \int_{\mathbb{R}} \sin(x) \dd \nu_x (x) \bigr ] = 0 \\
\mathbb{G}(1,x,z) = \mathbb{E}[\sin(x) - z] = 0
\end{cases}.
\end{align*}
Moreover, from (\ref{eq_32}) and (\ref{eq_33}), the HJI PDE above is equivalent to\footnote{It is the HJI equations in (\ref{eq_32}) and (\ref{eq_33}) when $H := H^{-} = H^{+}$.}
\begin{align*}
\begin{cases}
\partial_t G(t,\nu_x,\nu_z) + \int_{\mathbb{R}} \partial_{\nu_x} G(t,\nu_x,\nu_z)(x) \frac{1}{1+x^2} \zeta(x)  \dd x \\
\qquad \qquad + \int_{\mathbb{R}} \partial_{\nu_x} G(t,\nu_x,\nu_z)(x) \int_{\mathbb{R}} \sin(x) \dd \nu_x(x) \zeta(x)  \dd x = 0 \\
G(1,\nu_x,\nu_z) = \int_{\mathbb{R}} \sin(x) \zeta(x) \dd x - \int_{\mathbb{R}} z \zeta(z) \dd z =0
\end{cases},
\end{align*}
where $\zeta$ is the Gaussian probability density function. Here, we have utilized the fact that for any mean zero and variance one Gaussian random variable $x$ with the law $\nu_x = \mathbb{P}_{x}$, $\mathbb{E}[\sin(x)] = \int_{\mathbb{R}} \sin(x) \dd \nu_x(x) = \int_{\mathbb{R}} \sin(x) \zeta(x) \dd x = 0$ and $\mathbb{E}[x] = \int_{\mathbb{R}} x \dd \nu_x(x) = \int_{\mathbb{R}} x \zeta(x) \dd x = 0$. We can easily see that $\mathbb{G}(t,x,z) = G(t,\nu_x,\nu_z) = 0$ is the unique solution to the above PDE, which is the value of the ZSDG. This shows that the value of the ZSDG of this example is zero, which is determined by the laws (distributions) of random initial and target variables. Note that, in this example, the game value is independent of explicit values of initial and target variables.

\begin{figure}
\centering
\includegraphics[scale=0.47]{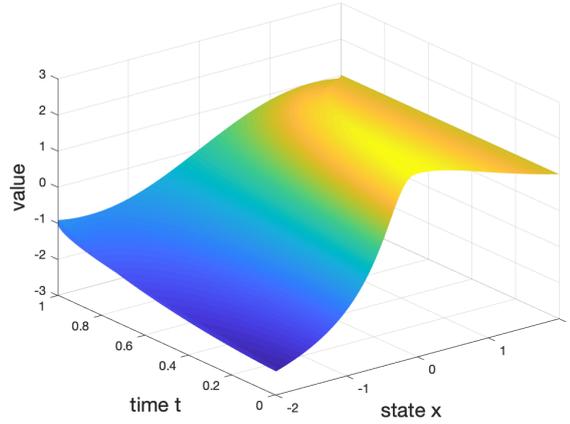}
\caption{The value of the ZSDG for the second example when $z=0$.}
\label{Fig_1}
\end{figure}

For the second example, with the same $f$, $l$ and $m$ as in the first example, assume now that $\nu_x$ and $\nu_z$ are Dirac measures. Then the associated ZSDG is reduced to the classical deterministic ZSDGs studied in \cite{Bardi_book_1997, Evans_1984, Basar2}, where the state argument of the value function is in $\mathbb{R}^n$. The HJI equation then becomes
\begin{align*}	
\begin{cases}
\partial_t \mathbb{G}(t,x,z) + D_x \mathbb{G}(t,x,z)(\frac{1}{1+x^2} + \sin(x))   + \sin(x) + x  = 0 \\
\mathbb{G}(1,x,z) = \sin(x) - z
\end{cases}.
\end{align*}
Its solution is depicted in Fig. \ref{Fig_1} when $z=0$, which is defined on $[0,1] \times [-2,2]$. In this example, we have used the finite-difference method to approximate the viscosity solution \cite{Crandall_Lions_MC_1984}. 

As seen from the two examples above and the results in the previous sections, the value of the class of ZSDGs considered in this paper depends on the laws (distributions) of random initial and target variables, whereas the values of the classical deterministic ZSDGs are determined by \emph{explicit values} of initial and target variables.

\section{Concluding Remarks}\label{Section_6}

We have studied, in this paper, a class of two-player zero-sum differential games, where the dynamical system depends on the random initial condition and the distribution of the state process, and the objective functional includes the latter as well as a random target variable. The (lower and upper) value functions are defined on two infinite-dimensional spaces, $\mathcal{P}_2$ and $\mathcal{L}_2$, which satisfy the dynamic programming principles. By using the notion of derivative in $\mathcal{P}_2$ and its lifted version in $\mathcal{L}_2$, the (lower and upper) value functions are shown to be unique (continuous) viscosity solutions to associated (lower and upper) HJI equations that are first-order PDEs on infinite-dimensional spaces. Under the Isaacs condition, the lower and upper value functions are identical, which implies that the ZSDG  has a value.

One possible future research topic would be to study the stochastic framework of ZSDGs in this paper as an extension of \cite{Cosso_JMPA_2018}, in which there is an additive Brownian noise in (\ref{eq_1}) and the corresponding diffusion term depends on the state, the law of the state process, and the control variables. This requires the notion of the second-order derivative in $\mathcal{P}_2$ and its lifted version in $\mathcal{L}_2$ to obtain DPPs, second-order HJI equations, and their viscosity solutions. Another direction would be the risk-sensitive ZSDGs. The problem of characterization of reachable sets, which can be viewed as an application of ZSDGs in this paper (see Example \ref{Example_3}), would also be an interesting avenue to pursue. In this case, the major challenge would be to solve the HJI equation numerically in the infinite-dimensional space. Finally, the extension of the rational expectations models considered in \cite{Basar_Rational_1989} to the continuous-time framework is an interesting problem to study.

\begin{appendix}
\numberwithin{equation}{section}
\renewcommand{\theequation}{\thesection.\arabic{equation}}

\section{Proof of Lemmas \ref{Lemma_1} and \ref{Lemma_2}} \label{Appendix_A}

\begin{Remark_1}\label{Remark_A_1}
\begin{enumerate}[(i)]
	\item For (\ref{eq_1}), let $x(t) = x$ be the initial condition of (\ref{eq_1}) at the initial time $t \in [0,T)$. Assume that $x$ is  distributed according to $\nu_x \in \mathcal{P}_2(\mathbb{R}^{n})$. Then the law of the state process is denoted by $\mathbb{P}_{s}^{t,\nu_x;u,v}$ for $s \in [t,T]$. Then we can easily show that $\mathbb{P}_{s}^{t,\nu_x;u,v} \in \mathcal{P}_2(\mathbb{R}^{n})$ satisfies
\begin{align}
\label{eq_5}
\mathbb{P}_{s}^{t,\nu_x;u,v} = \mathbb{P}_{s}^{r,\mathbb{P}_{r}^{t,\nu_x;u,v};u,v},~ 0 \leq t \leq r \leq s \leq T,
\end{align}
for any $\nu_x \in \mathcal{P}_2(\mathbb{R}^{n}) $, $u \in \mathcal{U}$ and $v \in \mathcal{V}$. That is, the law of the state process (\ref{eq_5}) satisfies the semigroup or flow property. 
\item We use the notation $x_{s}^{t,x,\nu_x;u,v} = x(s) $, $s \in [t,T]$, with $x_t^{t,x,\nu_x;u,v} = x(t) = x$ to emphasize the initial condition and the initial time.
\end{enumerate}
\end{Remark_1}

\begin{proof}[Proof of Lemma \ref{Lemma_1}]
		We prove (i) only, since the proof of (ii) is similar to that of (i). Consider the two initial pairs of random vectors $y = (x,z),~ \bar{y} = (\bar{x},\bar{z}) \in (\mathcal{L}_2(\Omega,\mathbb{R}^{n}))^2$ having the same law (distribution), i.e., $\nu = \mathbb{P}_{(x,z)} = \mathbb{P}_{(\bar{x},\bar{z})} \in (\mathcal{P}_2(\mathbb{R}^{n}))^2$. Since the objective functional in (\ref{eq_2}) does not depend on the random variables, but depends on the law of the initial random pair, we have $J(t,y; u,v) = J(t,\bar{y};u,v) = J(t,\nu;u,v)$ for $u \in \mathcal{U}[t,T]$ and $v \in \mathcal{V}[t,T]$. This, together with the fact that $\alpha \in \mathcal{A}[t,T]$ and $v \in \mathcal{V}[t,T]$ are not dependent on the law of the initial random pair, implies that $J(t,y; \alpha(v),v) = J(t,\bar{y};\alpha(v),v) = J(t,\nu;\alpha(v),v)$ for $\alpha \in \mathcal{A}[t,T]$ and $v \in \mathcal{V}[t,T]$. Then, from the definitions in (\ref{eq_3}) and (\ref{eq_6}), we have the desired result.
\end{proof}

%

\begin{proof}[Proof of Lemma \ref{Lemma_2}]
We prove here the continuity of only the lower value functions ($\mathbb{L}$ and $L$), since the proof for the upper value functions ($\mathbb{M}$ and $M$) follows along similar lines. In the proof below, a constant $c \geq 0$ can vary from line to line, which depends on the Lipschitz constant ((H.1) and (H.2)).

Let $y = (x,z)\in (\mathcal{L}_2(\Omega,\mathbb{R}^{n}))^2$ be the initial and target pair having the distribution (law) $\nu = (\nu_x,\nu_z) = \mathbb{P}_{(x,z)} \in (\mathcal{P}_2(\mathbb{R}^{n}))^2$. We apply a similar argument to the notation $y_i = (x_i,z_i)$ and $\nu_i = (\nu_{x_i},\nu_{z_i})$ for $i=1,2$. Let $t \in [0,T]$, with $t_1,t_2 \in [t,T]$. Then by using (H.1), Gronwall's lemma, and the fact that $\mathcal{P}_2(\mathbb{R}^{n}) \subset \mathcal{P}_1(\mathbb{R}^{n})$ and $W_1(\nu_{x_1},\nu_{x_2}) \leq W_2(\nu_{x_1},\nu_{x_2})$ \cite[Chapter 6]{Villani_book}, we have
\begin{align}
\label{eq_9}
\begin{cases}
\mathbb{E} \bigl [\sup_{s \in [t,T]} |x_{s}^{t,x,\nu_x;u,v}| \bigr ] \leq c (1 + \mathbb{E}[|x|]) \\
\mathbb{E} \bigl [ \sup_{s \in [t,T]} |x_{s}^{t,x_1,\nu_{x_1};u,v} - x_{s}^{t,x_2,\nu_{x_2};u,v} | \bigr ] \leq c \mathbb{E}[|x_1 - x_2|],  \\ 
\mathbb{E} \bigl [|x_{t_1}^{t,x,\nu_x;u,v} - x_{t_2}^{t,x,\nu_x;u,v} | \bigr ] \leq  c |t_1 - t_2|,  \\
\mathbb{E} \bigl [ \sup_{s \in [t,T]} |x_{s}^{t,x_1,\nu_{x_1};u,v} - x_{s}^{t,x_2,\nu_{x_2};u,v} | \bigr ]  \leq c W_2(\nu_{x_1},\nu_{x_2})	
\end{cases}.
\end{align}

In view of the definition of the Wasserstein metric, H\"older inequality, and the definition of the norm $\|\cdot\|_{\mathcal{L}_2}$, the preceding estimates in (\ref{eq_9}) imply that
\begin{align}
\label{eq_13_1}
& W_2(\mathbb{P}_s^{t,\nu_x;u,v}, \nu_x) \\
&=  \inf \bigl \{ \mathbb{E} \bigl [ |x_{s}^{t,x,\nu_{x};u,v} - x_{t}^{t,x,\nu_x;u,v} |^2 \bigr ]^{1/2} ~| \nonumber \\
&\qquad \qquad x_1,x_2 \in \mathcal{L}_2(\Omega,\mathbb{R}^{n})~  \text{with}~ \mathbb{P}_{x} = \nu_{x}~\text{and}~ \mathbb{P}_{x} = \nu_{x} \bigr \}  \leq c |t-s|, \nonumber
\end{align}
where the inequality follows from (\ref{eq_9}). Moreover, we have
\begin{align}
\label{eq_13}
& W_2(\mathbb{P}_s^{t,\nu_{x_1};u,v}, \mathbb{P}_s^{t,\nu_{x_2};u,v}) \\
& \leq c \inf \bigl \{ \mathbb{E}[|x_1-x_2|^2]^{1/2}~|~ \nonumber \\
&\qquad \qquad x_1, x_2 \in \mathcal{L}_2(\Omega,\mathbb{R}^{n})~  \text{with}~ \mathbb{P}_{x_1} = \nu_{x_1}~\text{and}~ \mathbb{P}_{x_2} = \nu_{x_2} \bigr \} \nonumber \\
& = c W_2 (\nu_{x_1},\nu_{x_2}), \nonumber
\end{align}
where the inequality follows from (\ref{eq_9}), the definition of the $2$-Wasserstein metric, and H\"older's inequality. Then using (\ref{eq_5}), (\ref{eq_13}) and (\ref{eq_13_1}), together with the distance property of $W_2$, we have
\begin{align}
\label{eq_14_1}
& W_2(\mathbb{P}_s^{t_1,\nu_{x_1};u,v}, \mathbb{P}_s^{t_2,\nu_{x_2};u,v}) = W_2(\mathbb{P}_s^{t_1,\nu_{x_1};u,v}, \mathbb{P}_s^{t_1,\mathbb{P}_{t_1}^{t_2,\nu_{x_2};u,v};u,v}) \\
& \leq c W_2(\nu_{x_1},\mathbb{P}_{t_1}^{t_2,\nu_{x_2};u,v}) \nonumber \\
& \leq c W_2(\nu_{x_1},\nu_{x_2}) + c W_2(\nu_{x_2},\mathbb{P}_{t_1}^{t_2,\nu_{x_2};u,v}) \leq c W_2(\nu_{x_1},\nu_{x_2}) + c |t_1 - t_2|. \nonumber 
\end{align}
Furthermore, with the estimates in (\ref{eq_9}), and (H.1) and (H.2),
for any $t \in [0,T]$, $t_1,t_2 \in [t,T]$ and $y,y_1,y_2 \in (\mathcal{L}_2 (\Omega,\mathbb{R}^{n}))^2$, we have for $u \in \mathcal{U}[t,T]$ and $v \in \mathcal{V}[t,T]$,
\begin{align}
\label{eq_14}
 & |J(t_1,y;u,v) - J(t_2,y;u,v)| \leq c |t_1 - t_2|   \\
\label{eq_15}
& |J(t,y_1;u,v) - J(t,y_2;u,v)|  \\
&\leq c  \|x_1 - x_2\|_{\mathcal{L}_2} + c \|z_1 - z_2 \|_{\mathcal{L}_2} + c W_2(\nu_{x_1},\nu_{x_2}). \nonumber 
\end{align}

The convergence in $\mathcal{L}_2(\Omega,\mathbb{R}^n)$ implies convergence in $\mathcal{P}_2(\mathbb{R}^n)$ with respect to $W_2$, i.e., $\|x_n - x\|_{\mathcal{L}_2} \rightarrow 0$  as $n \rightarrow \infty$ implies $W_2(\nu_n,\nu) \rightarrow 0$ as $n \rightarrow \infty$ \cite[Theorem 6.9]{Villani_book}. Also, $|\sup h(x) - \sup  g(x)| \leq \sup |h(x) - g(x)|$ and $|\inf h(x) - \inf  g(x)| \leq \sup |h(x) - g(x)|$. Then, from (\ref{eq_14}) and (\ref{eq_15}), and the definition of $\mathbb{L}$, we can easily see that $\mathbb{L}$ is continuous in $(t,y) \in [0,T] \times (\mathcal{L}_2(\Omega,\mathbb{R}^{n}))^2$.

For the continuity of $L$, we consider the following equivalent notation of the objective functional in terms of $\nu \in (\mathcal{P}_2(\mathbb{R}^{n}))^2$:
\begin{align*}
& J(t,\nu;u,v) 
 = \int_t^T \int_{\mathbb{R}^{n}} l(s,x_{s}^{t,x,\nu_x;u,v},\mathbb{P}_{s}^{t,\nu_x;u,v},  u(s),v(s)) \dd \mathbb{P}_{s}^{t,\nu_x;u,v} (x) \dd s \nonumber \\
&\qquad\qquad\qquad + \int_{\mathbb{R}^{n} \times \mathbb{R}^n} m(x_{T}^{t,x,\nu_x;u,v}, z) \dd   ( \mathbb{P}_{T}^{t,\nu_x;u,v}, \nu_z) (x,z). \nonumber
\end{align*}
Then, with (H.1), (H.2), (\ref{eq_9}) and (\ref{eq_14_1}), we apply the definition of the Wasserstein metric and \cite[Lemma 3]{Cardaliaguet_IGTR_2008} to show that
\begin{align}	
\label{eq_18}
|J(t,\nu_1;u,v) - J(t,\nu_2;u,v)|  \leq c (W_2(\nu_{x_1},\nu_{x_2}) + W_2(\nu_{z_1},\nu_{z_2})).  
\end{align}
Note that (\ref{eq_14}) and (\ref{eq_18}) imply that $J$ is continuous in $(t,\nu) \in [0,T] \times (\mathcal{P}_{2}(\mathbb{R}^{n}))^2$ for any $u \in \mathcal{U}[t,T]$ and $v \in \mathcal{V}[t,T]$. Then, by following the proof for the continuity of $\mathbb{L}$, we can show that $L$ is continuous in $(t,\nu) \in [0,T] \times (\mathcal{P}_{2}(\mathbb{R}^{n}))^2$. This completes the proof.
\end{proof}

\section{Proof of Theorem \ref{Theorem_1}} \label{Appendix_B}

We prove (\ref{eq_22}) only, since the proofs for (\ref{eq_20}), (\ref{eq_21}) and (\ref{eq_23}) are similar to that for (\ref{eq_22}). Let $t,t+\tau \in [0,T]$ with $t < t + \tau$. For any $\nu \in (\mathcal{P}_2(\mathbb{R}^{n}))^2$, let
\begin{align*}
& \bar{L}(t,\nu_x,\nu_z)  := \inf_{\alpha \in \mathcal{A}[t,t+\tau]} \sup_{v \in \mathcal{V}[t,t+\tau]} \Bigl \{  \int_t^{t+\tau} \int_{\mathbb{R}^{n}} l(s,x_{s}^{t,x,\nu_x;\alpha(v),v}, \mathbb{P}_{s}^{t,\nu_x;\alpha(v),v},  \\
&\qquad \qquad \alpha(v)(s),v(s)) \dd \mathbb{P}_{s}^{t,\nu_x;\alpha(v),v} (x) \dd s  + L(t+\tau,\mathbb{P}_{t+\tau}^{t,\nu_x;\alpha(v),v},\nu_z) \Bigr \}.	
\end{align*}
We need to show that $L(t,\nu_x,\nu_z) = \bar{L}(t,\nu_x,\nu_z)$. 

For any $\epsilon > 0$, there exists $\alpha^\prime \in \mathcal{A}[t,t+\tau]$ such that
\begin{align}
\label{eq_24}
& \bar{L}(t,\nu_x,\nu_z)   \\
&\geq 	\sup_{v \in \mathcal{V}[t,t+\tau]} \Bigl \{ \int_t^{t+\tau} \int_{\mathbb{R}^{n}} l(s,x_{s}^{t,x,\nu_x;\alpha^\prime(v),v}, \mathbb{P}_{s}^{t,\nu_x;\alpha^\prime(v),v}, \nonumber \\
&\qquad  \alpha^\prime(v)(s),v(s)) \dd \mathbb{P}_{s}^{t,\nu_x;\alpha^\prime(v),v} (x) \dd s  + L(t+\tau,\mathbb{P}_{t+\tau}^{t,\nu_x;\alpha^\prime(v),v},\nu_z) \Bigr \} - \epsilon. \nonumber
\end{align}
Similarly, in view of the definition of the value function, for any $\epsilon > 0$, there exists $\alpha^{\prime \prime} \in \mathcal{A}[t+\tau,T]$ such that for any $x \in \mathbb{R}^n$ with $\nu_x \in \mathcal{P}_2(\mathbb{R}^{n})$, 
\begin{align}
\label{eq_25}	
& L(t+\tau,\mathbb{P}_{t+\tau}^{t,\nu_x;\alpha^\prime(v),v},\nu_z) + \epsilon \\
&\geq 	\sup_{v \in \mathcal{V}[t+\tau,T]} \Bigl \{ \int_{t+\tau}^{T} \int_{\mathbb{R}^{n}} l(s,x_{s}^{t+\tau,x,\nu_x;\alpha^{\prime \prime}(v),v}, \mathbb{P}_{s}^{t+\tau,\nu_x;\alpha^{\prime \prime}(v),v},\nonumber \\
& \qquad\qquad\qquad \alpha^{\prime \prime}(v)(s),v(s)) \dd \mathbb{P}_{s}^{t+\tau,\nu_x;\alpha^{\prime \prime}(v),v} (x) \dd s  \nonumber \\
& \qquad +  \int_{\mathbb{R}^{n} \times \mathbb{R}^n} m(x_T^{t+\tau,x,\nu_x;\alpha^{\prime \prime}(v),v} , z) \dd   (\mathbb{P}_T^{t+\tau,\nu_x;\alpha^{\prime \prime}(v),v},\nu_z)(x,z) \Bigr \}  \nonumber
\end{align}
Define $\alpha \in \mathcal{A}[t,T]$ such that for $v \in \mathcal{V}[t,T]$, $\alpha(v)(s) = \alpha^\prime(v)(s)$ on $s \in [t,t+\tau)$ and $\alpha(v)(s) = \alpha^{\prime \prime}(v)(s)$ on $s \in [t+\tau,T]$. Then, from (\ref{eq_5}), (\ref{eq_24}) and (\ref{eq_25}), we can show that
\begin{align*}
& \bar{L}(t,\nu_x,\nu_z) + 2 \epsilon	 \\
& \geq \int_{t}^{T} \int_{\mathbb{R}^{n}} l(s,x_{s}^{t,x,\nu_x;\alpha(v),v}, \mathbb{P}_{s}^{t,\nu_x;\alpha(v),v}, \alpha(v)(s),v(s)) \dd \mathbb{P}_{s}^{t,\nu_x;\alpha(v),v} (x) \dd s   \\
&\qquad +  \int_{\mathbb{R}^{n} \times \mathbb{R}^n} m(x_T^{t,x,\nu_x;\alpha(v),v}, z)\dd (\mathbb{P}_T^{t,\nu_x;\alpha(v),v}, \nu_z)(x,z), 
\end{align*}
which implies
\begin{align}
\label{eq_26}
L(t,\nu) \leq \bar{L}(t,\nu) + 2\epsilon.
\end{align}

On the other hand, for any $\epsilon > 0$ and $v \in \mathcal{V}[t,T]$, there exists $\alpha^\prime \in \mathcal{A}[t,T]$ such that
\begin{align}
\label{eq_27}
& L(t,\nu_x,\nu_z) + \epsilon  \geq \sup_{v \in \mathcal{V}[t,T]}	\Bigl \{   \int_{t}^{T} \int_{\mathbb{R}^{n}} l(s,x_{s}^{t,x,\nu_x;\alpha^\prime(v),v}, \mathbb{P}_{s}^{t,\nu_x;\alpha^\prime(v),v}, \alpha^\prime(v)(s),\\
&~~~~~~~~~~~~~v(s)) \dd \mathbb{P}_{s}^{t,\nu_x;\alpha^\prime(v),v} (z) \dd s \Bigr \} + \int_{\mathbb{R}^{n} \times \mathbb{R}^n} m(x_T^{t,x,\nu_x;\alpha^\prime(v),v} , z) \dd (\mathbb{P}_T^{t,\nu_x;\alpha^\prime(v),v}, \nu_z)(x,z),  \nonumber
\end{align}
and by restricting $\alpha^\prime$ to $[t,t+\tau]$, we have
\begin{align}
\label{eq_28}
& \bar{L}(t,\nu_x,\nu_z) \\
& \leq 	\sup_{v \in \mathcal{V}[t,t+\tau]} \Bigl \{  L(t+\tau,\mathbb{P}_{t+\tau}^{t,\nu_x;\alpha(v),v},\nu_z) + \int_t^{t+\tau} \int_{\mathbb{R}^{n}} l(s,x_{s}^{t,x,\nu_x;\alpha^\prime(v),v}, \nonumber \\
&\qquad \qquad \mathbb{P}_{s}^{t,\nu_x;\alpha^\prime(v),v}, \alpha^\prime(v)(s),v(s)) \dd \mathbb{P}_{s}^{t,\nu_x;\alpha^\prime(v),v} (x) \dd s \Bigr \}. \nonumber
\end{align} 
The inequality in (\ref{eq_28}) implies that for each $\epsilon > 0$, there exists $v^\prime \in \mathcal{V}[t,t+\tau]$ such that
\begin{align}
\label{eq_29}	
& \bar{L}(t,\nu_x,\nu_z) \leq L(t+\tau,\mathbb{P}_{t+\tau}^{t,\nu_x;\alpha^\prime(v^\prime),v^\prime},\nu_z) +\int_t^{t+\tau} \int_{\mathbb{R}^{n}} l(s,x_{s}^{t,x,\nu_x;\alpha^\prime(v^\prime),v^\prime}, \\
& \qquad \qquad \mathbb{P}_{s}^{t,\nu_x;\alpha^\prime(v^\prime),v^\prime}, \alpha^\prime(v^\prime)(s), v^\prime(s)) \dd \mathbb{P}_{s}^{t,\nu_x;\alpha^\prime(v^\prime),v^\prime} (x) \dd s + \epsilon.  \nonumber
\end{align}
Similarly, for any $\epsilon > 0$, there exists $v^{\prime \prime} \in \mathcal{V}[t+\tau,T]$ such that for any $x \in \mathbb{R}^n$ with $\nu_x \in \mathcal{P}_2(\mathbb{R}^{n})$
\begin{align}
\label{eq_30}
& L(t+\tau,\mathbb{P}_{t+\tau}^{t,\nu_x;\alpha^\prime(v^\prime),v^\prime},\nu_z) \\
& \leq   \int_{\mathbb{R}^{n} \times \mathbb{R}^n} m(x_T^{t+\tau,x,\nu_x;\alpha^\prime(v^{\prime \prime}),v^{\prime \prime}},z) \dd (\mathbb{P}_T^{t+\tau,\nu_x;\alpha^\prime(v^{\prime \prime}),v^{\prime \prime}},\nu_z)(x,z)  \nonumber   \\
&\qquad+ \int_{t+\tau}^{T}  \int_{\mathbb{R}^{n}} l(s,x_{s}^{t+\tau,x,\nu_x;\alpha^\prime(v^{\prime \prime}),v^{\prime \prime}},\mathbb{P}_{s}^{t+\tau,\nu_x;\alpha^\prime(v^{\prime \prime}),v^{\prime \prime}}, \nonumber \\
&\qquad \qquad \qquad \alpha^\prime(v^{\prime \prime})(s),v^{\prime \prime}(s)) \dd \mathbb{P}_{s}^{t+\tau,\nu_x;\alpha^\prime(v^{\prime \prime}),v^{\prime \prime}} (x) \dd s  \ + \epsilon. \nonumber
\end{align}
We define $\bar{v} \in \mathcal{V}[t,T]$ such that $\bar{v}(s) = v^\prime(s)$ on $s \in [t,t+\tau)$ and $\bar{v}(s) = v^{\prime \prime}(s)$ on $s \in [t+\tau, T]$. Consider $\alpha^\prime \in \mathcal{A}[t,T]$ with $\bar{v}$, i.e., $\alpha^\prime(\bar{v})(s) = \alpha^\prime(v^\prime)(s)$ on $s \in [t, t+\tau)$ and $\alpha^\prime(\bar{v})(s) = \alpha^\prime(v^{\prime \prime})(s)$ on $s \in [t+\tau, T]$. Then, from (\ref{eq_5}), (\ref{eq_29}) and (\ref{eq_30}), 
\begin{align*}
 \bar{L}(t,\nu_x,\nu_z) & \leq \int_{\mathbb{R}^{n} \times \mathbb{R}^n} m(x_T^{t,x,\nu_x;\alpha^\prime(\bar{v}),\bar{v}},z) \dd (\mathbb{P}_T^{t,\nu_x;\alpha^\prime(\bar{v}),\bar{v}}, \nu_z)(x,z) \\
&\qquad + \int_{t}^{T}  \int_{\mathbb{R}^{n}} l(s,x_{s}^{t,x,\nu_x;\alpha^\prime(\bar{v}),\bar{v}}, \mathbb{P}_{s}^{t,\nu_x;\alpha^\prime(\bar{v}),\bar{v}},\\
&\qquad \qquad \qquad \alpha^\prime(\bar{v})(s),\bar{v}(s)) \dd \mathbb{P}_{s}^{t,\nu_x;\alpha^\prime(\bar{v}),\bar{v}} (x) \dd s  + 2\epsilon.   
\end{align*}
This, together with (\ref{eq_27}), implies that
\begin{align}
\label{eq_31}
\bar{L}(t,\nu) \leq L(t,\nu) + 3 \epsilon.
\end{align}
Since $\epsilon$ was arbitrary, in view of (\ref{eq_26}) and (\ref{eq_31}), $L(t,\nu) = \bar{L}(t,\nu)$; hence, we have the desired result.

\section{Proof of Theorem \ref{Theorem_2}} \label{Appendix_C}


In view of Lemma \ref{Lemma_2}, $L \in \mathcal{C}([0,T] \times \mathcal{P}_2(\mathbb{R}^n))$. We now prove that the value function $L$ in (\ref{eq_3}) is a viscosity supersolution of (\ref{eq_32}). From the definition of the value function and (\ref{eq_22}) in Theorem \ref{Theorem_1}, we have $\phi(T,\nu) = L(T,\nu)$.
		
From the definition of the viscosity supersolution (Definition \ref{Definition_2}(i)), for any $\phi \in \mathcal{C}^{1,1}([0,T] \times \mathcal{P}_2(\mathbb{R}^n))$, $L(t,\nu) - \phi(t,\nu) \leq L(t^\prime,\nu^\prime) - \phi(t^\prime,\nu^\prime)$ for all $(t^\prime,\nu^\prime)$ with $|t - t^\prime| + W_2(\nu,\nu^\prime) \leq \delta$ with some $\delta > 0$. Let $t^\prime = t+\tau$ and $\nu^\prime = (\nu_x^\prime, \nu_z)$ satisfying $|\tau| + W_2(\nu_x,\nu_x^\prime) \leq \delta$. Moreover, without loss of generality, we may assume $L(t,\nu) = \phi(t,\nu)$. 

Then in view of the DPP of (\ref{eq_22}) in Theorem \ref{Theorem_1},
	\begin{align*}
		\phi(t,\nu_x,\nu_z) &= L(t,\nu_x,\nu_z) \\
		& = \inf_{\alpha \in \mathcal{A}[t,t+\tau]} \sup_{v \in \mathcal{V}[t,t+\tau]} \Bigl \{  \int_t^{t+\tau} \int_{\mathbb{R}^{n}} l(s,x_{s}^{t,x,\nu_x;\alpha(v),v}, \mathbb{P}_{s}^{t,\nu_x; \alpha(v),v}, \nonumber \\
		&\qquad \qquad  \alpha(v)(s),v(s)) \dd \mathbb{P}_{s}^{t,\nu_x;\alpha(v),v} (x) \dd s  + L(t+\tau,\mathbb{P}_{t+\tau}^{t,\nu_x;\alpha(v),v},\nu_z) \Bigr \}, \nonumber
	\end{align*}
	and  
	\begin{align}
	\label{eq_40}
		&  \inf_{\alpha \in \mathcal{A}[t,t+\tau]} \sup_{v \in \mathcal{V}[t,t+\tau]} \Bigl \{  \int_t^{t+\tau} \int_{\mathbb{R}^{n}} l(s,x_{s}^{t,x,\nu_x;\alpha(v),v},\mathbb{P}_{s}^{t,\nu_x; \alpha(v),v},\alpha(v)(s), \\
		& v(s)) \dd \mathbb{P}_{s}^{t,\nu_x;\alpha(v),v} (x) \dd s  + \phi(t+\tau,\mathbb{P}_{t+\tau}^{t,\nu_x;\alpha(v),v},\nu_z) \Bigr \} - \phi(t,\nu_x,\nu_z) \leq 0. \nonumber 
	\end{align}	
	For $\alpha \in \mathcal{A}[t,T]$ and $v \in \mathcal{V}[t,T]$, $\inf_{u \in \mathcal{U}[t,T]} J(t,\nu;u,v) \leq \sup_{v \in \mathcal{V}[t,T]} J(t,\nu;\alpha(v), v)$, which implies that 
	\begin{align*}
	& \sup_{v \in \mathcal{V}[t,T]} \inf_{u \in \mathcal{U}[t,T]} J(t,\nu;u,v)  \leq \inf_{\alpha \in \mathcal{A}[t,T]} \sup_{v \in \mathcal{V}[t,T]} J(t,\nu;\alpha(v),v) = L(t,\nu).
	\end{align*}
Hence, with (\ref{eq_40}), we have
	\begin{align*}
	& \sup_{v \in \mathcal{V}[t,t+\tau]} \inf_{u \in \mathcal{U}[t,t+\tau]} \Bigl \{  \int_t^{t+\tau} \int_{\mathbb{R}^{n}} l(s,x_{s}^{t,x,\nu_x;u,v}, \mathbb{P}_{s}^{t,\nu_x; u,v}, u(s),\nonumber \\
		&\qquad \qquad v(s)) \dd \mathbb{P}_{s}^{t,\nu_x;u,v} (x) \dd s + \phi(t+\tau,\mathbb{P}_{t+\tau}^{t,\nu_x;u,v},\nu_z) \Bigr \} - \phi(t,\nu_x,\nu_z) \leq 0.	
	\end{align*}
	
	For each $\epsilon > 0$ and small $\tau$ with $|\tau| \leq \delta$, there exists $u^\prime \in \mathcal{U}[t,t+\tau]$ such that for $ v \in \mathcal{V}[t,t+\tau]$,
	\begin{align*}
	& \int_t^{t+\tau} \int_{\mathbb{R}^{n}} l(s,x_{s}^{t,x,\nu_x;u^\prime,v}, \mathbb{P}_{s}^{t,\nu_x; u^\prime,v}, u^\prime(s),v(s)) \dd \mathbb{P}_{s}^{t,\nu_x;u^\prime,v} (x) \dd s\nonumber \\
		&\qquad \qquad + \phi(t+\tau,\mathbb{P}_{t+\tau}^{t,\nu_x;u^\prime,v},\nu_z) - \phi(t,\nu_x,\nu_z) \leq \epsilon \tau.
	\end{align*}
	We multiply the above expression by $\frac{1}{\tau}$, and let $\tau \downarrow 0$ and $\epsilon \downarrow 0$. Then, with the chain rule in $\mathcal{P}_2$ in Section \ref{Section_2_1},
	\begin{align}
	\label{eq_41}
	&  \partial_t \phi(t,\nu_x,\nu_z)  + \sup_{v \in V} \inf_{u \in U} \Bigl \{ \int_{\mathbb{R}^n} [ \langle \partial_{\nu_x} \phi(t,\nu_x,\nu_z)(x), f(t,x,\nu_x,u,v) \rangle \\
	&\qquad \qquad \qquad + l(t,x,\nu_x,u,v) ]\dd \nu_x(x) \Bigr \} \leq 0, \nonumber
	\end{align}
	which, together with (\ref{eq_34}), shows that $L$ is a viscosity supersolution to (\ref{eq_32}). 

We now prove, by contradiction, that $L$ is a viscosity subsolution of (\ref{eq_32}). From the definition of the viscosity subsolution (Definition \ref{Definition_2}(i)), for any $\phi \in \mathcal{C}^{1,1}([0,T] \times \mathcal{P}_2(\mathbb{R}^n))$, $L(t,\nu) - \phi(t,\nu) \geq L(t^\prime,\nu^\prime) - \phi(t^\prime,\nu^\prime)$ for all $(t^\prime,\nu^\prime)$ with $|t - t^\prime| + W_2(\nu,\nu^\prime) \leq \delta$ with some $\delta > 0$. Let $t^\prime = t+\tau$, and $\nu^\prime = (\nu_x^\prime, \nu_z)$ satisfying $|\tau| + W_2(\nu_x,\nu_x^\prime) \leq \delta$. Moreover, without loss of generality, we may assume $L(t,\nu) = \phi(t,\nu)$.

Let us assume that $L$ is not a viscosity subsolution of (\ref{eq_32}). Then, there exists a constant $\theta > 0$ such that
\begin{align*}
	\partial_t \phi(t,\nu_x,\nu_z) + H^{-}(t,\nu,\partial_{\nu_x}\phi(t,\nu_x,\nu_z)) \leq  - \theta < 0.
\end{align*}
Define $\bar{H}(t,\nu,p,u,v) :=\int_{\mathbb{R}^{n}} [ \langle p,f(t,x,\nu_x,u,v) \rangle   + l(t,x,\nu_x,u,v) ] \dd \nu_x(x)$. From (\ref{eq_34}), note that $H^{-}(t,\nu,p) = \sup_{v \in V} \inf_{u \in U} \bar{H}(t,\nu,p,u,v)$. Since $f$ and $l$ are (uniformly) continuous on $[0,T] \times U \times V$, so is $\bar{H}$, which implies that there is a measurable function $\eta : V \rightarrow U$ and $\tau_0 \in [0,T-t]$ such that for $v \in V$ and $|s-t| \leq \tau_0$,
\begin{align*}
\partial_t \phi(s,\nu_x,\nu_z) + \bar{H}(s,\nu,\partial_{\nu_x}\phi(t,\nu_x,\nu_z),\eta(v),v) \leq - \theta / 2. 
\end{align*}

On the other hand, the DPP in (\ref{eq_22}) of Theorem \ref{Theorem_1} and the definition of the viscosity subsolution imply that
\begin{align*}
		&  \inf_{\alpha \in \mathcal{A}[t,t+\tau]} \sup_{v \in \mathcal{V}[t,t+\tau]} \Bigl \{  \int_t^{t+\tau} \int_{\mathbb{R}^{n}} l(s,x_{s}^{t,x,\nu_x;\alpha(v),v}, \mathbb{P}_{s}^{t,\nu_x; \alpha(v),v}, \alpha(v)(s), \nonumber \\
		&\qquad  v(s)) \dd \mathbb{P}_{s}^{t,\nu_x;\alpha(v),v} (x) \dd s  + \phi(t+\tau,\mathbb{P}_{t+\tau}^{t,\nu_x;\alpha(v),v},\nu_z) \Bigr \} - \phi(t,\nu_x,\nu_z) \geq 0,
	\end{align*}
	and by defining $\alpha^\prime(v(s)):= \eta(v)$ for $s \in [t, t+\tau]$, we have $\alpha^\prime(v) \in \mathcal{A}[t,t+\tau]$ and
	\begin{align*}
& 	\sup_{v \in \mathcal{V}[t,t+\tau]} \Bigl \{  \int_t^{t+\tau} \int_{\mathbb{R}^{n}} l(s,x_{s}^{t,x,\nu_x;\alpha^\prime(v),v},\mathbb{P}_{s}^{t,\nu_x; \alpha^\prime(v),v}, \alpha^\prime(v)(s), \nonumber \\
		&\qquad  v(s)) \dd \mathbb{P}_{s}^{t,\nu_x;\alpha^\prime(v),v} (x) \dd s  + \phi(t+\tau,\mathbb{P}_{t+\tau}^{t,\nu_x;\alpha^\prime(v),v},\nu_z) \Bigr \} - \phi(t,\nu_x,\nu_z) \geq 0.	
	\end{align*}
	Then, for each $\epsilon > 0$, there exists $v^\prime \in \mathcal{V}[t,t+\tau]$ such that
	\begin{align*}
	& \int_t^{t+\tau} \int_{\mathbb{R}^{n}} l(s,x_{s}^{t,x,\nu_x;\alpha^\prime(v),v^\prime}, \mathbb{P}_{s}^{t,\nu_x; \alpha^\prime(v),v^\prime}, \alpha^\prime(v^\prime)(s),v^\prime(s)) \dd \mathbb{P}_{s}^{t,\nu_x;\alpha^\prime(v^\prime),v^\prime} (x) \dd s \nonumber \\
		&\qquad \qquad  + \phi(t+\tau,\mathbb{P}_{t+\tau}^{t,\nu_x;\alpha^\prime(v^\prime),v^\prime},\nu_z)  - \phi(t,\nu_x,\nu_z) \geq - \epsilon \tau.
	\end{align*}
Multiplying the above expression by $\frac{1}{\tau}$, and letting $\tau \downarrow 0$, together with the chain rule in $\mathcal{P}_2$ in Section \ref{Section_2_1}, yield
\begin{align*}
-\epsilon &\leq \partial_t \phi(t,\nu_x,\nu_z)  + \bar{H}(t,\nu,\partial_{\nu_x}\phi(t,\nu_x,\nu_z),\eta(v^\prime),v^\prime) \leq - \theta / 2, 
\end{align*}
and by letting $\epsilon \downarrow 0$, we must have $\theta \leq  0$, which leads to a contradiction. This implies that
\begin{align}
\label{eq_42}
\partial_t \phi(t,\nu_x,\nu_z) + H^{-}(t,\nu,\partial_{\nu_x}\phi(t,\nu_x,\nu_z)) \geq 0.	
\end{align}
Hence, (\ref{eq_41}) and (\ref{eq_42}) taken together show that $L$ is a viscosity solution to (\ref{eq_32}). The proof of $M$ being a viscosity solution to (\ref{eq_33}) is similar. This completes the proof.

\section{Proof of Theorem \ref{Theorem_3}}\label{Appendix_D}

In the proof of Theorem \ref{Theorem_3}, we need the following lemma, which follows from (H.1)-(H.3).

\begin{lemma_1}\label{Lemma_Appendix_D_1}
Assume that (H.1)-(H.3) hold. Then, the following result holds: there is a constant $c$, dependent on the Lipschitz constant, such that for any $y = (x,z), ~y_1 = (x_1,z_1), ~y_2 = (x_2,z_2) \in (\mathcal{L}_2(\Omega,\mathbb{R}^{n}))^2$ and $p,p_1,p_2 \in \mathcal{L}_2(\Omega,\mathbb{R}^{n})$, we have $|\mathbb{H}^{-}(y_1,p) - \mathbb{H}^{-}(y_2,p)|  \leq c (1 + \|p\|_{\mathcal{L}_2}) \|x_1 - x_2\|_{\mathcal{L}_2}$ and $|\mathbb{H}^{-}(y,p_1) - \mathbb{H}^{-}(y,p_2)|  \leq c \|p_1 - p_2 \|_{\mathcal{L}_2}$.
%
\end{lemma_1}

The proof of Theorem \ref{Theorem_3} now proceeds as follows.
\begin{proof}[Proof of Theorem \ref{Theorem_3}]
We first prove $\mathbb{L}_1(t,y) \leq \mathbb{L}_2(t,y)$ for $(t,y) \in [0,T] \times (\mathcal{L}_2(\Omega, \mathbb{R}^{n}))^2$. Note that both $\mathbb{L}_1$ and $\mathbb{L}_2$ are bounded by some constant $c$. In the proof below, a constant $c$ can vary from line to line, depending on the bounds of $\mathbb{L}_1$ and $\mathbb{L}_2$, and the Lipschitz constant in (H.1) and (H.2).

By a possible abuse of notation, we reverse the time by defining $t_i := T - t_i^\prime$, where $t_i^\prime \in [0,T]$, $i=1,2$. Then $\mathbb{L}_i(0,y) = \mathbb{E}[m(x,z)]$, $i=1,2$. With the time reverse notation and Remark \ref{Remark_4}(ii) (see \cite[Chapter 10]{Evans_Book}), for the lifted HJI equations, the inequality in Definition \ref{Definition_2} has to be modified by
	\begin{align}
	\label{eq_appendix_d_1_1_1}
	\begin{cases}
	\partial_t \phi(t,y) - \mathbb{H}^{-}(t,y,D_x \phi(t,x,z)) \leq 0 & 		\text{(subsolution)} \\
	\partial_t \phi(t,y) - \mathbb{H}^{-}(t,y,D_x \phi(t,x,z))  \geq 0 & \text{(supersolution)}
	\end{cases}.
	\end{align}

For $(\epsilon, \sigma,\alpha) \in (0,1)$, define 
\begin{align*}
\Phi(t_1,y_1,t_2,y_2)  :=&  \mathbb{L}_1(t_1, y_1) - \mathbb{L}_2(t_2, y_2) \\ 
&  - \frac{1}{2 \epsilon} ((t_1-t_2)^2 + \|y_1 - y_2 \|_{\mathcal{L}_2}^2)  - \frac{\alpha}{2} ( \|y_1 \|_{\mathcal{L}_2}^2 + \|y_2 \|_{\mathcal{L}_2}^2) - \sigma t_1,
\end{align*}
where $y_1,y_2 \in (\mathcal{L}_2(\Omega, \mathbb{R}^{n}))^2$. We can see that $\Phi$ is continuous on $X = ([0,T] \times (\mathcal{L}_2(\Omega, \mathbb{R}^{n}))^2)^2$, where $X$ is a Hilbert space and its dual space $X^*$ is $X^* = X$ \cite{Luenberger_book, Conway_2000_book}. For $\zeta_{t_1}, \zeta_{t_2} \in [0,T]$ and $\zeta_{y_1}, \zeta_{y_2} \in (\mathcal{L}_2(\Omega, \mathbb{R}^{n}))^2$, i.e., $(\zeta_{t_1},\zeta_{y_1}, \zeta_{t_2}, \zeta_{y_2}) \in X^*$, let us define the linearly perturbed map of $\Phi$:
\begin{align*}	
\Phi^\prime(t_1,y_1,t_2, y_2) := & \Phi(t_1,y_1,t_2, y_2) - \zeta_{t_1} t_1 - \zeta_{t_2} t_2 \\
&  - \mathbb{E}[\langle \zeta_{y_1},y_1 \rangle] - \mathbb{E}[\langle \zeta_{y_2},y_2 \rangle].
\end{align*}
Then in view of Stegall's theorem \cite{Stegall_1978, Fabian_NA_2007} and Riesz representation theorem \cite{Conway_2000_book}, there exist $(\zeta_{t_1},\zeta_{y_1}, \zeta_{t_2}, \zeta_{y_2}) \in X^*$ such that $|\zeta_{t_i}| \leq \delta$ , $\|\zeta_{y_i}\|_{\mathcal{L}_2} \leq \delta$, $i=1,2$, with $\delta \in (0,1)$, and $\Phi^\prime$ has a maximum at a point $(\bar{t}_1,\bar{y}_1,\bar{t}_2,\bar{y}_2) \in X$.\footnote{In fact, $-\Phi$ is continuous, coercive and $-\Phi:X \rightarrow [0, \infty]$; hence, in view of \cite{Fabian_NA_2007}, $-\Phi^\prime$ admits a minimum, i.e., $\Phi^\prime$ admits a maximum.} This implies that
\begin{align*}
\Phi^\prime(\bar{t}_1, \bar{y}_1,\bar{t}_2, \bar{y}_2) \geq \Phi^\prime(0,0,0,0) = \Phi(0,0,0,0),
\end{align*}
which, together with Cauchy-Schwarz inequality and the fact that $a+b \leq \sqrt{2} (a^2 + b^2)^{1/2}$ for $a,b \geq 0$, implies (note that $\mathbb{L}_1$ and $\mathbb{L}_2$ are bounded) 
\begin{align*}
& \alpha (\|\bar{y}_1\|_{\mathcal{L}_2}^2 + \|\bar{y}_2\|_{\mathcal{L}_2}^2) + \frac{1}{\epsilon} ((\bar{t}_1-\bar{t}_2)^2 + \|\bar{y}_1-\bar{y}_2 \|_{\mathcal{L}_2}^2) \\
& \leq c(1+\delta ( \|\bar{y}_1\|_{\mathcal{L}_2}^2 + \|\bar{y}_2\|_{\mathcal{L}_2}^2)^{1/2}).
\end{align*}
We apply the quadratic analysis to the above inequality. Then
\begin{align}
\label{eq_45}
	(\|\bar{y}_1\|_{\mathcal{L}_2}^2 + \|\bar{y}_2 \|_{\mathcal{L}_2}^2)^{1/2} & \leq c \Bigl (\frac{\delta}{\alpha} + \frac{1}{\alpha^{1/2}} \Bigr ) \\
\label{eq_46}
 ((\bar{t}_1-\bar{t}_2)^2 + \|\bar{y}_1-\bar{y}_2\|_{\mathcal{L}_2}^2)^{1/2}  & \leq c \epsilon^{1/2} + c \epsilon^{1/2} \delta^{1/2} \Bigl (\frac{\delta}{\alpha} + \frac{1}{\alpha^{1/2}} \Bigr )^{1/2}.
\end{align}

We show that either $\bar{t}_1 = 0$ or $\bar{t}_2 = 0$ by contradiction. Assume that $\bar{t}_i > 0$ for $i=1,2$. By defining
\begin{align*}
\phi_1(t_1,y_1) :=& \mathbb{L}_2(\bar{t}_1,\bar{y}_2)	 + \frac{1}{2\epsilon} ((t_1 - \bar{t}_2)^2 + \|y_1 - \bar{y}_2 \|_{\mathcal{L}_2}^2) \\
& + \frac{\alpha}{2} ( \|y_1 \|_{\mathcal{L}_2}^2 + \|\bar{y}_2 \|_{\mathcal{L}_2}^2) + \sigma t_1  \\
& + \zeta_{t_1} t_1 + \zeta_{t_2} \bar{t}_2 + \mathbb{E}[\langle \zeta_{y_1},y_1\rangle ] + \mathbb{E}[\langle \zeta_{y_2},\bar{y}_2\rangle ],
\end{align*}
we have $\Phi^\prime( t_1, y_1,\bar{t}_2,\bar{y}_2) = \mathbb{L}_1(t_1,y_1) - \phi_1 (t_1,y_1)$, which admits a maximum at $(\bar{t}_1,\bar{y}_1)$. Note that $\mathbb{L}_1$ is the viscosity subsolution and $\mathbb{H}^{-}$ is independent of $t$ (see (H.3)). This, together with (\ref{eq_appendix_d_1_1_1}), implies
\begin{align}
\label{eq_47}
& \frac{1}{\epsilon} (\bar{t}_1 - \bar{t}_2) + \sigma + \zeta_{t_1}   - \mathbb{H}^{-}(\bar{y}_1,\frac{1}{\epsilon} (\bar{y}_1 - \bar{y}_2) + \alpha \bar{y}_1 + \zeta_{y_1}) \leq 0. 
\end{align}
The inequality is reversed due to the time reverse notation. Similarly, we have $\Phi^\prime(\bar{t}_1,\bar{y}_1,t_2,y_2) = \phi_2(t_2,y_2) - \mathbb{L}_2(t_2,y_2)$, where
\begin{align*}
\phi_2(t_2,y_2) :=& \mathbb{L}_1(\bar{t}_1,\bar{y}_1)  - \frac{1}{2\epsilon}((\bar{t}_1 - t_2)^2 + \|\bar{y}_1 - y_2 \|^2) \\
&  - \frac{\alpha}{2}( \|\bar{y}_1 \|_{\mathcal{L}_2}^2 + \|y_2 \|_{\mathcal{L}_2}^2) - \sigma \bar{t}_1 \\
& - \zeta_{t_1} \bar{t}_1 - \zeta_{t_2} t_2 - \mathbb{E}[\langle \zeta_{y_1}, \bar{y}_1 \rangle ] - \mathbb{E}[\langle \zeta_{y_2}, y_2 \rangle ],
\end{align*}
which admits a maximum at $(\bar{t}_2,\bar{y}_2)$, i.e., $\mathbb{L}_2(t_2,y_2) - \phi_2(t_2,y_2)$ admits a minimum at $(\bar{t}_2,\bar{y}_2)$. Then, from (\ref{eq_appendix_d_1_1_1}), we have
\begin{align}
\label{eq_48}
& \frac{1}{\epsilon}(\bar{t}_1 - \bar{t}_2) - \zeta_{t_2}  - \mathbb{H}^{-}(\bar{y}_2,\frac{1}{\epsilon}(\bar{y}_1 - \bar{y}_2) - \alpha \bar{y}_2 - \zeta_{y_2}) \geq   0.
\end{align}

(\ref{eq_47}) and (\ref{eq_48}) imply that
\begin{align*}
& \sigma + \zeta_{t_1} + \zeta_{t_2} - \mathbb{H}^{-}(\bar{y}_1,\frac{1}{\epsilon} (\bar{y}_1 - \bar{y}_2) + \alpha \bar{y}_1 + \zeta_{y_1}) \\
& + \mathbb{H}^{-}(\bar{y}_2,\frac{1}{\epsilon}(\bar{y}_1 - \bar{y}_2) - \alpha \bar{y}_2 - \zeta_{y_2}) \leq 0.
\end{align*}
From (H.1)-(H.3), (\ref{eq_45}), (\ref{eq_46}), and Lemma \ref{Lemma_Appendix_D_1},
\begin{align*}
\sigma   	  
& \leq 2 \delta   + \Bigl | \mathbb{H}^{-}(\bar{y}_1,\frac{1}{\epsilon} (\bar{y}_1 - \bar{y}_2) + \alpha \bar{y}_1 + \zeta_{y_1})  - \mathbb{H}^{-}(\bar{y}_1,\frac{1}{\epsilon}(\bar{y}_1 - \bar{y}_2) - \alpha \bar{y}_2 - \zeta_{y_2}) \Bigr |  \\
&~~~ \qquad + \Bigl | \mathbb{H}^{-}(\bar{y}_1,\frac{1}{\epsilon}(\bar{y}_1 - \bar{y}_2) - \alpha \bar{y}_2 - \zeta_{y_2})  - \mathbb{H}^{-}(\bar{y}_2,\frac{1}{\epsilon}(\bar{y}_1 - \bar{y}_2) - \alpha \bar{y}_2 - \zeta_{y_2}) \Bigr | \nonumber \\
%
%
& \leq 2 \delta +  c \alpha \Bigl (\frac{\delta}{\alpha} + \frac{1}{\alpha^{1/2}} \Bigr )^2  + c \delta^2     + c \epsilon + c \epsilon \delta \Bigl (\frac{\delta}{\alpha} + \frac{1}{\alpha^{1/2}} \Bigr )  + c \delta \epsilon^{1/2}.  \nonumber
\end{align*}
First, let $\delta \downarrow 0$ and then $\alpha \downarrow 0$ and $\epsilon \downarrow 0$. Then, we can easily get a contradiction, since $\sigma > 0$. This shows that we can select small positive $\delta$, $\alpha$ and $\epsilon$ such that either $\bar{t}_1 =0$ or $\bar{t}_2 = 0$. 

Let us assume that $\bar{t}_1 =0$. Then, the maximum property of $\Phi^\prime$ and its definition yield
\begin{align*}
\Phi^\prime(t,y,t,y) &\leq 	
\Phi(0,\bar{y}_1,\bar{t}_2,\bar{y}_2) - \zeta_{t_2} \bar{t}_2 - \mathbb{E}[\langle \zeta_{y_1}, \bar{y}_1 \rangle] - \mathbb{E}[\langle \zeta_{y_2}, \bar{y}_2 \rangle],
\end{align*}
which implies
\begin{align}
\label{eq_49}
\Phi(t,y,t,y) \leq &	 \Phi(0,\bar{y}_1,\bar{t}_2,\bar{y}_2)  - \zeta_{t_2} \bar{t}_2 - \mathbb{E}[\langle \zeta_{y_1}, \bar{y}_1 \rangle] - \mathbb{E}[\langle \zeta_{y_2}, \bar{y}_2 \rangle] \\
& + (\zeta_{t_1} + \zeta_{t_2}) t + \mathbb{E}[\langle \zeta_{y_1} + \zeta_{y_2}, y \rangle]. \nonumber
\end{align}
Since $\bar{t}_1 = 0$ and $\mathbb{L}_1(0,y) = \mathbb{L}_2(0,y)$, we have
\begin{align}
\label{eq_D_7}
\Phi(0,\bar{y}_1,\bar{t}_2,\bar{y}_2) 
& \leq c \epsilon^{1/2} + c \epsilon^{1/2} \delta^{1/2} \Bigl (\frac{\delta}{\alpha} + \frac{1}{\alpha^{1/2}} \Bigr )^{1/2}, 
\end{align}
where the inequality follows from the Lipschitz property and (\ref{eq_46}). On the other hand, in view of (\ref{eq_45}) and Cauchy-Schwarz inequality,
\begin{align}
\label{eq_D_8}
|\zeta_{t_2} \bar{t}_2 + \mathbb{E}[\langle \zeta_{y_1},\bar{y}_1 \rangle ]	 + \mathbb{E}[\langle \zeta_{y_2}, \bar{y}_2 \rangle]|  & \leq c \delta  + c \delta  \Bigl (\frac{\delta}{\alpha} + \frac{1}{\alpha^{1/2}} \Bigr ) \\
\label{eq_D_9}
|(\zeta_{t_1} + \zeta_{t_2}) t + \mathbb{E}[\langle \zeta_{y_1} + \zeta_{y_2}, y \rangle]| &\leq c \delta.	
\end{align}
By first letting $\epsilon \downarrow 0$, and then $\delta \downarrow 0$ and $\alpha \downarrow 0$ in (\ref{eq_D_7})-(\ref{eq_D_9}), from (\ref{eq_49}) and the definition of $\Phi$, we have
\begin{align*}
\mathbb{L}_1(t,y) - \mathbb{L}_2(t,y)	\leq 0,
\end{align*}
which leads to the desired result in (\ref{eq_44}). Then $L_1(t,\nu) \leq L_2(t,\nu)$ for $(t,\nu) \in [0,T] \times (\mathcal{P}_2(\mathbb{R}^{n}))^2$ in (\ref{eq_43}) follows from Lemma \ref{Lemma_1}. The proofs for $\mathbb{M}_1 \leq \mathbb{M}_2$ in (\ref{eq_44}) and $M_1 \leq M_2$ in (\ref{eq_43}) are similar. This completes the proof.
\end{proof}

\section{Proof of Corollaries \ref{Corollary_1} and \ref{Corollary_2}}\label{Appendix_E}

\begin{proof}[Proof of Corollary \ref{Corollary_1}]
	Suppose that $\mathbb{L}_1$ and $\mathbb{L}_2$ are value functions that are viscosity solutions to (\ref{eq_36}). In view of (\ref{eq_44}) in Theorem \ref{Theorem_3}, Lemma \ref{Lemma_2}, and the definition of the viscosity solution, we have $\mathbb{L}_1 \leq \mathbb{L}_2$ and $\mathbb{L}_2 \leq \mathbb{L}_1$, which implies that $\mathbb{L} := \mathbb{L}_1 = \mathbb{L}_2$. By Proposition \ref{Proposition_1}, $\mathbb{L}$ is the corresponding lifted lower value function. The proof of the remaining part is similar. This completes the proof.
\end{proof}

\begin{proof}[Proof of Corollary \ref{Corollary_2}]
	Set $H := H^{-} = H^{+}$ in (\ref{eq_32}) and (\ref{eq_33}), and $\mathbb{H} := \mathbb{H}^{-} = \mathbb{H}^{+}$ in (\ref{eq_36}) and (\ref{eq_37}). Then, (\ref{eq_32}) and (\ref{eq_33}) become identical HJI equations, and so do (\ref{eq_36}) and (\ref{eq_37}). From Lemmas \ref{Lemma_1} and \ref{Lemma_2}, Proposition \ref{Proposition_1} and Theorem \ref{Theorem_2}, together with the uniqueness result in Corollary \ref{Corollary_1}, we have $L(t,\nu) = \mathbb{L}(t,y) = \mathbb{M}(t,y) = M(t,\nu)  $ for $(t,\nu) \in [0,T] \times (\mathcal{P}_2(\mathbb{R}^{n}))^2$ and $(t,y) \in [0,T] \times (\mathcal{L}_2(\Omega, \mathbb{R}^{n}))^2$, which is the value of the ZSDG and is the unique solution to the HJI equation. This completes the proof.
\end{proof}

\end{appendix}

\bibliographystyle{IEEEtran}       
\bibliography{researches_1.bib} 

\begin{thebibliography}{10}
\providecommand{\url}[1]{#1}
\csname url@samestyle\endcsname
\providecommand{\newblock}{\relax}
\providecommand{\bibinfo}[2]{#2}
\providecommand{\BIBentrySTDinterwordspacing}{\spaceskip=0pt\relax}
\providecommand{\BIBentryALTinterwordstretchfactor}{4}
\providecommand{\BIBentryALTinterwordspacing}{\spaceskip=\fontdimen2\font plus
\BIBentryALTinterwordstretchfactor\fontdimen3\font minus
  \fontdimen4\font\relax}
\providecommand{\BIBforeignlanguage}[2]{{%
\expandafter\ifx\csname l@#1\endcsname\relax
\typeout{** WARNING: IEEEtran.bst: No hyphenation pattern has been}%
\typeout{** loaded for the language `#1'. Using the pattern for}%
\typeout{** the default language instead.}%
\else
\language=\csname l@#1\endcsname
\fi
#2}}
\providecommand{\BIBdecl}{\relax}
\BIBdecl

\bibitem{Basar2}
T.~Ba\c{s}ar and G.~J. Olsder, \emph{Dynamic Noncooperative Game Theory},
  2nd~ed.\hskip 1em plus 0.5em minus 0.4em\relax SIAM, 1999.

\bibitem{Bardi_book_1997}
M.~Bardi and I.~Capuzzo-Dolcetta, \emph{Optimal Control and Viscosity
  Solustions of $\text{Hamilton-Jabobi-Bellman}$ Equations}.\hskip 1em plus
  0.5em minus 0.4em\relax Birkh\"{a}user, 1997.

\bibitem{Bensoussan_book_2007}
A.~Bensoussan, G.~Da~Prato, M.~C. Delfour, and S.~Mitter, \emph{Representation
  and Control of Infinite Dimensional Systems}, 2nd~ed.\hskip 1em plus 0.5em
  minus 0.4em\relax Birkhauser, 2007.

\bibitem{Buckdahn_Cardaliaguet_DGAA_2011}
R.~Buckdahn, P.~Cardaliaguet, and M.~Quincampoix, ``Some recent aspects of
  differential game theory,'' \emph{Dynamic Games and Applications}, vol.~1,
  pp. 74--114, 2011.

\bibitem{Basar_Zaccour_book}
T.~Ba\c{s}ar and G.~Zaccour, Eds., \emph{Handbook of Dynamic Game Theory,
  Volumes I and II}.\hskip 1em plus 0.5em minus 0.4em\relax Springer, 2018.

\bibitem{Isaacs_book}
R.~Isaacs, \emph{Differential Games}.\hskip 1em plus 0.5em minus 0.4em\relax
  Dover Publications, 1965.

\bibitem{Elliott_1972}
R.~J. Elliott and N.~J. Kalton, \emph{Existence of value in differential
  games}.\hskip 1em plus 0.5em minus 0.4em\relax Memoirs of the Ametican
  Mathematical Society, 1972, vol. 126.

\bibitem{Evans_1984}
L.~C. Evans and P.~E. Souganidis, ``Differential games and representation
  formulas for solutions of $\text{Hamilton-Jabobi-Isaacs}$ equations,''
  \emph{Indiana Univ. Math. J}, vol.~33, pp. 293--314, 1984.

\bibitem{Fleming_1989}
W.~H. Fleming and P.~E. Souganidis, ``On the existence of value functions of
  two-player zero-sum stochastic differential games,'' \emph{Indiana Univ.
  Math. J}, vol.~38, pp. 293--314, 1989.

\bibitem{Friedman_JDE_1970}
A.~Friedman, ``Existence of value and of saddle points for differential games
  of pursuit and evasion,'' \emph{Journal of Differential Equations}, vol.~7,
  pp. 92--110, 1970.

\bibitem{Ghosh_JOTA_2004}
M.~K. Ghosh and A.~J. Shaiju, ``Existence of value and saddle point
  ininfinite-dimensional differential games,'' \emph{Journal of Optimization
  Theory and Applications}, vol. 121, no.~2, pp. 301--325, 2004.

\bibitem{Buckdahn_SICON_2008}
R.~Buckdahn and J.~Li, ``Stochastic differential games and viscosity solutions
  of $\text{Hamilton-Jabobi-Bellman-Isaacs}$ equations,'' \emph{SIAM Journal on
  Control and Optimization}, vol.~47, no.~4, pp. 444--475, 2008.

\bibitem{Li_AMO_2015}
J.~Li and Q.~Wei, ``Stochastic differential games for fully coupled
  $\text{FBSDEs}$ with jumps,'' \emph{Applied Mathematics and Optimization},
  vol.~71, pp. 411--448, 2015.

\bibitem{Qui_ESIAM_2013}
H.~Qui and J.~Yong, ``$\text{Hamilton-Jacobi}$ equations and two-person
  zero-sum differential games with unbounded controls,'' \emph{ESIAM: Control,
  Optimization and Calculus of Variations}, vol.~19, pp. 404--437, 2013.

\bibitem{Pham_SICON_2014}
T.~Pham and J.~Zhang, ``Two-person zero-sum game in weak formulation and path
  dependent $\text{Bellman-Isaacs Equation}$,'' \emph{SIAM Journal on Control
  and Optimization}, vol.~52, no.~4, pp. 2090--2121, 2014.

\bibitem{Li_MIn_SICON_2016}
J.~Li and H.~Min, ``Weak solutions of mean field stochastic differential
  equations and applications to zero-sum stochastic differential games.''
  \emph{SIAM Journal on Control and Optimization}, vol.~54, no.~3, pp.
  1826--1858, 2016.

\bibitem{Djehiche_AMO_2018}
B.~Djehiche and S.~Hamadene, ``Optimal control and zero-sum stochastic
  differential game problems of mean-field type,'' \emph{Applied Mathematics
  and Optimization}, pp. 1--28, 2016.

\bibitem{Averboukh_DGAA_2018}
Y.~Averboukh, ``$\text{Krasovskii-Subbotin}$ approach to mean field type
  differential games,'' \emph{Dynamic Games and Applications}, pp. 1--21, 2018,
  https://doi.org/10.1007/s13235-018-0282-6.

\bibitem{Zhang_SICON_2005}
P.~Zhang, ``Some results on two-person zero-sum linear quadratic differential
  games,'' \emph{SIAM Journal on Control and Optimization}, vol.~43, no.~6, pp.
  2157--2165, 2005.

\bibitem{Delfour_SICON_2007}
M.~C. Delfour, ``Linear quadratic differential games: Saddle point and
  $\text{Riccati}$ differential equations,'' \emph{SIAM Journal on Control and
  Optimization}, vol.~46, no.~2, pp. 750--774, 2007.

\bibitem{YU_SICON_2015}
Z.~Yu, ``An optimal feedback control-strategy pair for zero-sum
  linear-quadratic stochastic differential game: the $\text{R}$iccati equation
  approach,'' \emph{SIAM Journal on Control and Optimization}, vol.~53, no.~4,
  pp. 2141--2167, 2015.

\bibitem{Sun_SICON_2016}
J.~Sun, X.~Li, and J.~Yong, ``Open-loop and closed-loop solvability for
  stochastic linear quadratic optimal control problems,'' \emph{SIAM Journal on
  Control and Optimization}, vol.~54, no.~5, pp. 2274--2308, 2016.

\bibitem{Moon_TAC_2019_Markov}
J.~Moon, ``A sufficient condition for linear-quadratic stochastic zero-sum
  differential games for $\text{Markov}$ jump systems,'' \emph{IEEE
  Transactions on Automatic Control}, vol.~64, no.~4, pp. 1619--1626, 2019.

\bibitem{Moon_TAC_Risk_2019}
J.~Moon, T.~Duncan, and T.~Ba\c{s}ar, ``Risk-sensitive zero-sum differential
  games,'' \emph{IEEE Transactions on Automatic Control}, vol.~64, no.~4, pp.
  1503--1518, 2019.

\bibitem{5446378}
G.~Wang and Z.~Yu, ``A $\text{P}$ontryagin's maximum principle for non-zero sum
  differential games of $\text{BSDEs}$ with applications,'' \emph{IEEE
  Transactions on Automatic Control}, vol.~55, no.~7, pp. 1742--1747, 2010.

\bibitem{Mitchell_TAC_2005}
I.~M. Mitchell, A.~M. Bayen, and C.~J. Tomlin, ``A time-dependent
  $\text{Hamilton-Jacobi}$ formulation of reachable sets for continuous dynamic
  games,'' \emph{IEEE Transactions on Automatic Control}, vol.~50, no.~7, pp.
  947--957, 2005.

\bibitem{Margellos_TAC_2011}
K.~Margellos and J.~Lygeros, ``$\text{Hamilton-Jacobi}$ formulation for
  reach-avoid differential games,'' \emph{IEEE Transactions on Automatic
  Control}, vol.~56, no.~8, pp. 1849--1861, 2011.

\bibitem{Yong_Book_2015}
J.~Yong, \emph{Differential Games: A Concise Introduction}.\hskip 1em plus
  0.5em minus 0.4em\relax World Scientific, 2015.

\bibitem{Yong_book}
J.~Yong and X.~Y. Zhou, \emph{Stochastic Controls: Hamiltonian Systems and HJB
  Equations}.\hskip 1em plus 0.5em minus 0.4em\relax Springer, 1999.

\bibitem{Cardaliaguet}
P.~Cardaliaguet, ``Notes on mean field games,'' Jan 2012, $\text{T}$echnical
  Report.

\bibitem{Carmona_book_2018}
R.~Carmona and F.~Delarue, \emph{Probabilistic Theory of Mean Field Games with
  Applications I}.\hskip 1em plus 0.5em minus 0.4em\relax Springer, 2018.

\bibitem{Cardaliaguet_IGTR_2008}
P.~Cardaliaguet and M.~Quincampoix, ``Deterministic differential games under
  probability knowledge of initial condition,'' \emph{International Game Theory
  Review}, vol.~10, no.~1, pp. 1--16, 2008.

\bibitem{Cosso_JMPA_2018}
A.~Cosso and H.~Pham, ``Zero-sum stochastic differential games of generalized
  $\text{McKean-Vlasov}$ type,'' \emph{Journal de Math\'ematiques Pures et
  Appliqu\'ees}, vol. 129, pp. 180--212, 2018.

\bibitem{Luenberger_book}
D.~G. Luenberger, \emph{Optimization by Vector Space Methods}.\hskip 1em plus
  0.5em minus 0.4em\relax John Wiley \& Sons, 1969.

\bibitem{Conway_2000_book}
J.~B. Conway, \emph{A Course in Functional Analysis}.\hskip 1em plus 0.5em
  minus 0.4em\relax Springer, 2000.

\bibitem{Rachev}
S.~T. Rachev and L.~Ruschendorf, \emph{Mass Transportation Theory: Volume I:
  Theory}.\hskip 1em plus 0.5em minus 0.4em\relax Springer, 1998.

\bibitem{Villani_book}
C.~Villani, \emph{Optimal Transport}.\hskip 1em plus 0.5em minus 0.4em\relax
  Springer, 2009.

\bibitem{Rene_anals_2015}
R.~Carmona and F.~Delarue, ``Forward-backward stochastic differential equations
  and controlled $\text{McKean-Vlasov}$ dynamics,'' \emph{The Annals of
  Probability}, vol.~43, no.~5, pp. 2647--2700, 2015.

\bibitem{Jourdain_2008}
B.~Jourdain, S.~Meleard, and W.~Woyczynski, ``Nonlinear $\text{SDEs}$ driven by
  $\text{Levy}$ processes and related $\text{PDEs}$,'' \emph{ALEA, Latin
  American Journal of Probability}, vol.~4, pp. 1--29, 2008.

\bibitem{Lasry}
J.~M. Lasry and P.~L. Lions, ``Mean field games,'' \emph{Jap. J. Math.},
  vol.~2, no.~1, pp. 229--260, 2007.

\bibitem{Andersson_AMO_2010}
D.~Andersson and B.~Djehiche, ``A maximum principle for $\text{SDEs}$ of
  mean-field type,'' \emph{Applied Mathematics and Optimization}, vol.~63,
  no.~3, pp. 341--356, 2010.

\bibitem{7047683}
B.~Djehiche, H.~Tembine, and R.~Tempone, ``A stochastic maximum principle for
  risk-sensitive mean-field type control,'' \emph{IEEE Transactions on
  Automatic Control}, vol.~60, no.~10, pp. 2640--2649, 2015.

\bibitem{Rene_SICON_2013}
R.~Carmona and F.~Delarue, ``Probabilistic analysis of mean-field games,''
  \emph{SIAM Journal on Control and Optimization}, vol.~51, no.~4, pp.
  2705--2734, 2013.

\bibitem{Yong_SICON_2013_MF}
J.~Yong, ``Linear-quadratic optimal control problems for mean-field stochastic
  differential equations,'' \emph{SIAM Journal on Control and Optimization},
  vol.~51, no.~4, pp. 2809--2838, 2013.

\bibitem{Bensoussan_Arxiv_2014}
A.~Bensoussan, K.~C.~J. Sung, S.~C.~P. Yam, and C.~P. Yung, ``Linear-quadratic
  mean field games,'' 2014, https://arxiv.org/abs/1404.5741.

\bibitem{Cardaliaguet_MFE_2018}
P.~Cardaliaguet and C.~A. Lehalle, ``Mean field game of controls and an
  application to trade crowding,'' \emph{Mathematics and Financial Economics},
  vol.~12, no.~3, pp. 335--363, 2018.

\bibitem{Moon_DGAA_2019}
J.~Moon and T.~Ba\c{s}ar, ``Risk-sensitive mean field games via the stochastic
  maximum principle,'' \emph{Dynamic Games and Applications}, vol.~9, pp.
  1100--1125, 2019.

\bibitem{Goodfellow_book}
I.~Goodfellow, Y.~Bengio, and A.~Courville, \emph{Deep Learning}.\hskip 1em
  plus 0.5em minus 0.4em\relax MIT Press, 2016.

\bibitem{Crandall_Lions_1983}
M.~G. Crandall and P.-L. Lions, ``Viscosity solutions of
  $\text{Hamilton-Jacobi}$ equations,'' \emph{Transactions on American
  Mathematical Society}, vol. 277, no.~1, pp. 1--42, 1983.

\bibitem{Crandall_Lions_MC_1984}
------, ``Two approximations of solutions of $\text{Hamilton-Jacobi}$
  equations,'' \emph{Mathematics of Computation}, vol.~43, no. 167, pp. 1--19,
  1984.

\bibitem{Basar_Rational_1989}
T.~Ba\c{s}ar, ``Some thoughts on rational expectations model, and alternative
  formulations,'' \emph{Computers and Mathematics with Applications}, vol.~18,
  no. 6/7, pp. 591--604, 1989.

\bibitem{Evans_Book}
L.~C. Evans, \emph{Partial Differential Equations}, 2nd~ed.\hskip 1em plus
  0.5em minus 0.4em\relax American Mathematical Society, 2010.

\bibitem{Stegall_1978}
C.~Stegall, ``Optimization of functions on certain subsets of $\text{Banach}$
  spaces,'' \emph{Mathematische Annalen}, vol. 236, no.~2, pp. 171--176, 1978.

\bibitem{Fabian_NA_2007}
M.~Fabian and C.~Finet, ``On $\text{Stegall's}$ smooth variational principle,''
  \emph{Nonlinear Analysis}, vol.~66, p.~5, 2007.

\end{thebibliography}

\address{Jun Moon \\ School of Electrical and Computer Engineering \\  University of Seoul \\
	Seoul, South Korea, 02504\\
\email{jmoon12@uos.ac.kr}}

\address{Tamer Ba\c{s}ar \\ Coordinated Science Laboratory \\
 University of Illinois at Urbana-Champaign \\
 Urbana, Illinois, USA, 61801 \\
\email{basar1@illinois.edu} 
 }
\end{document}